\newtheorem{Th}{\scshape  Theorem}[section]
\newtheorem{Lem}[Th]{\scshape  Lemma}
\newtheorem{Cor}[Th]{\scshape Corollary}
\newtheorem{Prop}[Th]{\scshape Proposition}
\begin{document}

\title{Minimal graphs with eigenvalue multiplicity of $n-d$}
\author{
  Yuanshuai Zhang, \quad Dein Wong \thanks{Corresponding author. \quad E-mail addresses:wongdein@163.com.\quad Supported by the National Natural Science Foundation of China(No.11971474)}, \quad  Wenhao Zhen \\
\small School of Mathematics, China University of Mining and Technology, Xuzhou, China \\
}
\date{}
\maketitle

\noindent {\bf Abstract:}
For a connected graph $G$ with order $n$, let $e(G)$ be the number of its distinct eigenvalues and $d$ be the diameter.
We denote by $m_G(\mu)$ the eigenvalue multiplicity of $\mu$ in $G$.
It is well known that $e(G)\geq d+1$,
which shows $m_G(\mu)\leq n-d$ for any real number $\mu$.
A graph is called $minimal$ if $e(G)= d+1$.
In 2013, Wang (\cite{WD}, Linear Algebra Appl.) characterize all minimal graphs with $m_G(0)=n-d$.
In 2023, Du et al. (\cite{Du}, Linear Algebra Appl.) characterize all the trees for which there is a real symmetric matrix with nullity $n-d$ and $n-d-1$.
In this paper, by applying the star complement theory, we prove that if $G$ is not a path and $m_G(\mu)= n-d$, then $\mu \in \{0,-1\}$.
Furthermore, we completely characterize all minimal graphs with $m_G(-1)=n-d$.

\noindent {\bf Keywords:} eigenvalue multiplicity, star complement theory, diameter

\noindent {\bf AMS Subject Classifications:} 05C50

\section{Introduction}
\quad \quad
All graphs in this paper are simple undirected graphs.
As usual, let $G$ be a connected graph with vertex set $V(G)$ and edge set $E(G)$.
We write $n(G)$ to denote the order of $G$, i.e., the number of vertices in $G$.
The adjacency matrix $ A(G)$  of   $G$ is  an $ n\times n$ square matrix whose $(i,j)$ entry takes $1$ if vertices $i$ and $j$ are adjacent in $G$, and it takes $0$ if otherwise.
The rank of a matrix $A$ is denoted by $rk(A)$
The eigenvalues of $A(G)$ are directly called the eigenvalues of $G$.
For a real number $\mu$, we denote by $m_G(\mu )$ the multiplicity of $\mu $ as an eigenvalue of   $G$, where $m_G(\mu)=0$ implies that $\mu$ is not an eigenvalue of $G$.
In this sense that $rk(A(G)-\mu I)+m_G(\mu)=n$.
If $\mu=0$,  $m_G(\mu )$ is also said to be the nullity of $G$.

For a subset $X$ of $ V(G)$, we denote by $G[X]$ the induced subgraph of $G$ with vertex set $X$  and denote $G[V(G)\backslash X]$ by $G-X$. For simplicity, we replace    $G-\{v\}$ with $G-v$. Let $H$ be an induced graph of $G$, $y\in V(G)\backslash V(H)$, we replace $G[(V(H)\cup \{y\}]$ by $H+y$ for simplicity.
We write  $u\sim v$ if  vertices $u$ and $v$ are adjacent in $G$, and $u\nsim v$ if otherwise.
The set of all neighbors of $v\in V(G)$ in $G$ is denoted by $N_G(v)$, and $N_G[v]=N_G(v)\cup \{v\}$ is the closed neighborhood set of $v$ in $G$.
We call $v$ a pendant vertex if $|N_G(v)|=1$.
For $v\in V(G)$ and a subgraph $H$ of $G$, we write $v\sim H$ to mean that $N_H(v)\neq \emptyset$, and say that $v$ is adjacent to $H$.
By $P_n$ (resp., $C_n$, $K_n$), we  denote a path (resp.,  a cycle, a complete graph) of order $n$.

A path $P_k$ has the form $V(P)=\{v_1,v_2,\ldots, v_k\}$ and $E(P)=\{v_1v_2, v_2v_3, \ldots, v_{k-1}v_k\}$,
where the vertices $v_1,v_2, \ldots, v_k$ are all distinct. We say that $P$ is a path from $v_1$ to $v_k$, or a $(v_1,v_k)-$path.
The number of edges of the path is its length.
The distance $d(x,y)$ in $G$ of two vertices $x,y$ is the length of a shortest $(x,y)-$path in $G$, if no such path exists, we define $d(x,y)$ to be infinite.
The greatest distance between any two vertices in $G$ is the diameter of $G$, denoted by $d(G)$.
Let's write $d(G)$ as $d$ if there's no misunderstanding.

Let $P_{d+1}=v_0v_1\cdots v_{d-1}v_d$ is a path of length $d$ and let $W=\{v_{i_1}, v_{i_2},\ldots,v_{i_k}\}$ be a subset of $V(P_{d+1})$. We attach $k$ single vertices $\{u_{i_1}, u_{i_2},\ldots,u_{i_k}\}$ into $P_{d+1}$ such that $N_{P_{d+1}}(u_j)=\{v_j,v_{j+1}\}$ for $j\in \{i_1,i_2,\ldots,i_k\}$. We call the resulting graph $P_{d+1}\diamond W$.
For example, if $d=7$ and $W=\{v_2,v_3,v_5\}$, then $P_{d+1}\diamond W$ is shown in Fig 1.

  \begin{figure}[h!]
    \centering
   \includegraphics[width=5in]{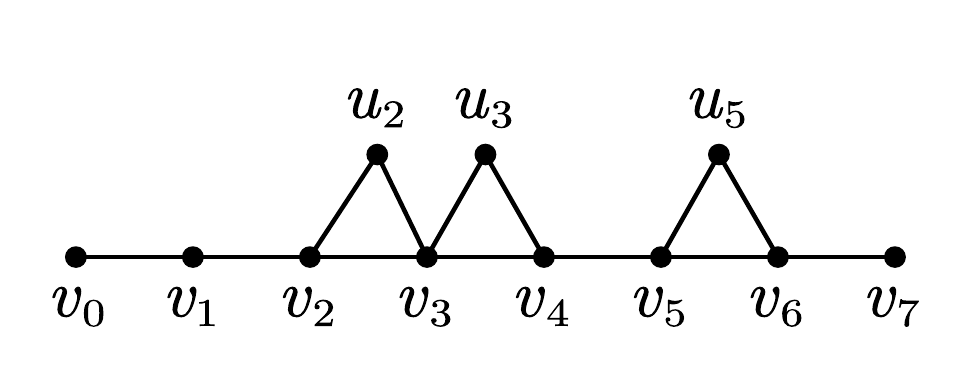}\\
   \caption{$P_8\diamond \{v_2,v_3,v_5\}$}
   \label{1}
   \end{figure}

Let $G$ be a graph and $u,v\in V(G)$. We introduce (see \cite{Pe}) a binary relation $\rho$ in $V(G)$ in the following way: $u\rho v$  if and only if $N_G[u]=N_G[v]$.
The relation $\rho$ is symmetric and transitive.
By this relation the vertex set $V(G)$ is divided to $k$ disjoint subsets $C_1,C_2, \ldots, C_k$, ($1\leq k \leq n, n = |V(G)|$), such
that every graph induced by the set $C_i$ ($i = 1,\ldots, k$) is a complete graph.
The graph $G^c$, obtained from $G$ by identification of all vertices from the same subset $C_i$ ($i = 1,\ldots, k$) is called the ``$C$-canonical graph" of the graph $G$.
If $V(G)$ does not have any pair of vertices which lie in relation $\rho$, then $G^c= G$ and we say that $G$ is a $C$-canonical graph.
For instance, all connected bipartite graphs distinct of $P_2$ are $C$-canonical graphs.

Let $G$ be a graph, we denote by $e(G)$ the number of distinct eigenvalues of $G$. It is known that $e(G)\geq d+1$ if $G$ is a connected graph  (\cite{Brouwer}, Proposition 1.3.3). Further, we have the following proposition.

\begin{Prop}
Suppose $G$ is a simple connected graph on $n$ vertices and $d$ is its diameter. Let $\mu$ be a real number, then
\begin{center}
$m_G(\mu)\leq n-d$.
\end{center}
\end{Prop}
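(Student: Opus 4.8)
The plan is to read the bound off directly from the inequality $e(G)\ge d+1$ recalled just before the statement (\cite{Brouwer}, Proposition 1.3.3). Since $A(G)$ is real symmetric it is orthogonally diagonalizable, so if $\lambda_1,\dots,\lambda_{e(G)}$ are its pairwise distinct eigenvalues then $\sum_{i=1}^{e(G)} m_G(\lambda_i)=n$ with every $m_G(\lambda_i)\ge 1$. If $\mu$ is not an eigenvalue of $G$, then $m_G(\mu)=0\le n-d$ (here $d\le n-1$ since $G$ is a connected graph on $n$ vertices), and there is nothing to prove. If $\mu$ is an eigenvalue, say $\mu=\lambda_1$, then
\[
n=m_G(\mu)+\sum_{i=2}^{e(G)} m_G(\lambda_i)\ \ge\ m_G(\mu)+\bigl(e(G)-1\bigr)\ \ge\ m_G(\mu)+d,
\]
and rearranging gives $m_G(\mu)\le n-d$.

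A self-contained variant, which also explains \emph{why} $e(G)\ge d+1$ forces this, is to bound $rk(A(G)-\mu I)$ from below by $d$ directly. Choose $x,y\in V(G)$ with $d(x,y)=d$ and a shortest $(x,y)$-path $v_0v_1\cdots v_d$, and consider the $d\times d$ submatrix $B$ of $A(G)-\mu I$ whose rows are indexed by $v_0,v_1,\dots,v_{d-1}$ and whose columns are indexed by $v_1,v_2,\dots,v_d$. Because the path is geodesic, $v_i\sim v_j$ implies $|i-j|=1$; hence the entry of $B$ in the row of $v_{i-1}$ and the column of $v_j$ vanishes whenever $j>i$, so $B$ is lower triangular, and its diagonal entries, being the $(v_{i-1},v_i)$-entries of $A(G)$, are all equal to $1$. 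Thus $\det B=1\neq 0$, so $rk(A(G)-\mu I)\ge d$ and $m_G(\mu)=n-rk(A(G)-\mu I)\le n-d$.

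I do not expect a genuine obstacle here: the proposition is essentially a repackaging of $e(G)\ge d+1$. The only points needing care are the trivial separate treatment of the case $m_G(\mu)=0$ (so that the sum-of-multiplicities identity applies with $\mu$ in the spectrum), and — in the direct variant — the verification that the displayed submatrix is genuinely triangular with unit diagonal, which is exactly where the shortest-path property of $v_0v_1\cdots v_d$ is used.
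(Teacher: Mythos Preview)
Your first argument is exactly the reasoning the paper intends: the proposition is stated immediately after citing $e(G)\ge d+1$ and is meant to be read off from that inequality via the multiplicity count, just as you do. Your second, self-contained variant via the lower-triangular geodesic submatrix is also correct and is a pleasant bonus, though the paper does not pursue it.
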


In particular, a graph $G$ is called minimal if $e(G)=d+1$.
Characterizing all minimal graphs is a challenging work for researchers.
In \cite{Bee}, the author provided various minimal graphs by a recursive construction.
By the above proposition, if an eigenvalue $\mu$ of $G$ exists such that $m_G(\mu)= n-d$, then $e(G)=d+1$ and $G$ is a minimal graph.
So, characterizing graphs $G$ with $m_G(\mu)=n-d$ for some real number $\mu$ is a problem worth studying.
In this paper, we obtain the following theorem.

\begin{Th}\label{Th1}
Let $G$ be a connected graph and $\mu$ be a real number. If $G$ is not a path and $m_G(\mu)=n-d$, then $\mu \in \{0,-1\}$.
\end{Th}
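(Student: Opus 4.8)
The plan is to combine Cauchy's interlacing theorem with a short rank computation, with no use of heavy machinery. Suppose $m_G(\mu)=n-d$ and $G$ is not a path; then $d\le n-2$, so $n-d\ge 2$. Fix vertices at distance $d$ and a geodesic $P=w_0w_1\cdots w_d$ between them. Because $P$ is a geodesic, $d_G(w_i,w_j)=|i-j|$ for all $i,j$, and consequently $G[V(P)]\cong P_{d+1}$. The first step is to show $\mu\in\operatorname{spec}(P_{d+1})$: order the eigenvalues of $G$ non-increasingly, so that $\mu$ occupies a block of $n-d$ consecutive positions, say positions $p+1,\dots,p+n-d$ with $0\le p\le d$; applying interlacing to the principal submatrix $A(P_{d+1})$ of $A(G)$ (co-order $n-d-1$) forces the $(p+1)$-th largest eigenvalue of $P_{d+1}$ to equal $\mu$, using only $n-d\ge 1$. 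Hence $\mu=2\cos\tfrac{j\pi}{d+2}$ for some $j$; moreover $R:=A(P_{d+1})-\mu I$ has corank $1$, and $\ker R=\langle\psi\rangle$ with $\psi_k=\sin\tfrac{(k+1)j\pi}{d+2}$.

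Second, since $G$ is connected and not a path, there is a vertex $z\notin V(P)$ with $N:=N_G(z)\cap V(P)\ne\varnothing$. Put $G'=G[V(P)\cup\{z\}]$, of order $d+2$. Applying interlacing once more, to $A(G')$ (co-order $n-d-2$) and now using $n-d\ge 2$, forces positions $p+1$ and $p+2$ of $\operatorname{spec}(G')$ to both equal $\mu$, so $m_{G'}(\mu)\ge 2$, i.e. $\operatorname{rank}\bigl(A(G')-\mu I\bigr)\le d$. Writing $A(G')-\mu I$ in block form with diagonal blocks $R$ and $(-\mu)$ and off-diagonal block the indicator vector $\mathbf{1}_N$ of $N$, a Schur-complement computation (the top-left block has rank exactly $d$) shows that $\operatorname{rank}\bigl(A(G')-\mu I\bigr)\le d$ is equivalent to the two conditions: (I) $\mathbf{1}_N^{\top}\psi=0$; and (II) $\mathbf{1}_N^{\top}v=-\mu$, where $v$ is any solution of $Rv=\mathbf{1}_N$ (this exists by (I), and $\mathbf{1}_N^{\top}v$ is well defined because two solutions differ only by a multiple of $\psi$).

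Third, I would use $d_G(w_i,w_j)=|i-j|$ again: if $z\sim w_i$ and $z\sim w_k$ then $d_G(w_i,w_k)\le 2$, so $|i-k|\le 2$ and $N$ lies in a window of three consecutive path-vertices. Hence, after a translation, $N$ is one of $\{w_m\}$, $\{w_m,w_{m+1}\}$, $\{w_m,w_{m+2}\}$, $\{w_m,w_{m+1},w_{m+2}\}$. In each case $Rv=\mathbf{1}_N$ is a three-term (tridiagonal) recurrence which I solve on the two sides of $N$ along $P$ using the boundary conditions $v_{-1}=v_{d+1}=0$ together with (I); substituting the result into (II) gives, respectively: $v_m=0$, hence $\mu=0$; $v_m+v_{m+1}=1$, hence $\mu=-1$; $v_m+v_{m+2}\in\{0,\mu\}$, hence $\mu=0$; and in the last case (I) already reads $\psi_{m+1}(1+\mu)=0$, while if $\mu\ne-1$ then $\psi_{m+1}=0$ forces $v_m+v_{m+1}+v_{m+2}=2+\mu$, so (II) again yields $\mu=-1$. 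In every case $\mu\in\{0,-1\}$, which is the assertion.

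The soft part is the interlacing bookkeeping — the only place the hypothesis $G$ is not a path enters is that $n-d\ge 2$ keeps the indices $p+1,p+2$ inside the $\mu$-block. The main obstacle is the four explicit recurrence computations in the third step, together with the degenerate sub-cases where the support of $\mathbf{1}_N$ meets an endpoint $w_0$ or $w_d$ of $P$; these have to be checked separately but turn out to give the same conclusions.
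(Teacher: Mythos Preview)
Your argument is correct, and it reaches the same four-case analysis on $N_{P}(z)$ that the paper carries out, but via a genuinely different toolkit. The paper sets up $P'=P_{d+1}-v_0$ as a star complement for $\mu$, invokes the projection $P$ onto $\mathcal{E}_{A(G')}(\mu)$, and uses the identity $\mu Pe_v=\sum_{u\sim v}Pe_u$ together with the linear independence of $Pe_{v_0},Pe_x$ (Proposition~\ref{SP}) to derive a contradiction in each case. You bypass the star-complement machinery entirely: Cauchy interlacing gives $m_{P_{d+1}}(\mu)=1$ and $m_{G'}(\mu)\ge 2$, and then a Schur-complement rank count on
\[
A(G')-\mu I=\begin{pmatrix}R & \mathbf{1}_N\\ \mathbf{1}_N^{\top} & -\mu\end{pmatrix}
\]
converts $\operatorname{rank}\le d$ into the two scalar conditions (I) $\mathbf{1}_N^{\top}\psi=0$ and (II) $\mathbf{1}_N^{\top}v=-\mu$. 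Solving the tridiagonal system $Rv=\mathbf{1}_N$ with the normalisation $v_0=0$ (legitimate since $\psi_0=\sin\theta\ne 0$) collapses the left block to zero and reads off $\mathbf{1}_N^{\top}v$ directly, yielding $\mu\in\{0,-1\}$ in each of the four neighbourhood shapes.

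What each approach buys: yours is more elementary and self-contained (no appeal to the dominating-set property of star complements or to Lemmas~\ref{SS}--\ref{DD}), and it makes the role of the hypothesis ``$G$ not a path'' transparent as exactly the inequality $n-d\ge 2$ needed for the second interlacing step. The paper's projection calculus, on the other hand, is reusable later in the proof of Theorem~\ref{Th2} (the $Pe_{v_j}$ pattern in Claim~4 is read off from the same identities), so it amortises better across the two theorems. Two minor remarks on your write-up: in the case $N=\{w_m,w_{m+2}\}$ your computation actually gives $v_m+v_{m+2}=\mu$ unconditionally (not ``$\in\{0,\mu\}$''), and in the last case the identity $v_m+v_{m+1}+v_{m+2}=2+\mu$ holds without first assuming $\psi_{m+1}=0$; both still force the stated conclusions. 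The endpoint sub-cases $m\in\{0,d\}$ (or $m\in\{0,d-1\}$, $m\in\{0,d-2\}$) are disposed of either by (I) failing outright (since $\psi_0,\psi_d\ne 0$) or by the same normalised recurrence, so your closing caveat is easy to discharge.
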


Further, we want to characterize all graphs that reach bound $m_G(\mu)=n-d$.
The situation $\mu=0$ has been solved.
In 2013, Wong et al. \cite{WD} characterize graphs with maximum diameter among all connected graph with rank $n$.
In fact, the result in \cite{WD} contains characterization of  all minimal graphs with $m_G(0)=n-d$.
In 2022, Wang \cite{WZ} use the method of $star \  complement$ to characterize all minimal graphs with $m_G(0)=n-d$ again.
In 2023, Du and Fonseca \cite{Du} extend the definition of minimal graphs on adjacency matrices to real symmetric matrices. They characterize all the trees for which there is a real symmetric matrix with nullity $n-d$ and $n-d-1$.
Therefore, we investigate the situation $\mu=-1$.

\begin{Th}\label{Th2}
Let $G$ be a connected graph with order $n$ and diameter $d$. Then $m_G(-1)=n-d$ if and only if $G$ is one of the following forms:\\
(i) $G$ is a complete graph with $n\geq 2$,\\
(ii) $G^c\cong P_5$,\\
(iii) $d\geq 7$, $d\equiv 1(mod\ 3)$ and $G^c$  is isomorphic to $P_{d+1} \diamond W$, where $P_{d+1}=v_0v_1\cdots v_{d-1}v_d$ and $W$ is a subset of the set $U= \{v_3,v_6,\ldots, v_{d-4}\}$.
\end{Th}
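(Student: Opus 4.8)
The plan is to prove both implications, the forward (classification) one through the star complement technique that the abstract advertises. \emph{Sufficiency.} For each $G$ in (i)--(iii) one checks $rk(A(G)+I)=d$, i.e.\ $m_G(-1)=n-d$: for $K_n$ this is immediate since $A(K_n)+I=J_n$ has rank $1=d$; for $P_5$ and $P_{d+1}\diamond W$ the submatrix of $A(G)+I$ on the path vertices $v_1,\dots,v_d$ is $A(P_d)+I$, nonsingular because $d\equiv1\pmod3$, so those $d$ columns are independent, while a direct computation with the explicit inverse of $A(P_d)+I$ (recorded below) shows the column indexed by $v_0$ and each column indexed by an attached vertex $u_j$, $j\in W$, lies in their span --- which is exactly where the hypothesis $W\subseteq\{v_3,\dots,v_{d-4}\}$ is used. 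Since the clique blow-ups are handled in the necessity part, one may restrict the check to the $C$-canonical representatives $P_5$ and $P_{d+1}\diamond W$. For the reduction step: if $N_G[u]=N_G[u']$ with $u\neq u'$, then $G$ has at least three vertices, $G-u$ is connected with $d(G-u)=d$, and $e_u-e_{u'}$ spans a one-dimensional complement of the $(-1)$-eigenspace of $G-u$ inside that of $G$, so $m_{G-u}(-1)=(n-1)-d$, and a clique blow-up reverses this; hence it suffices to classify the $C$-canonical graphs with the property.

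So assume $G$ is $C$-canonical with $m_G(-1)=n-d$; if $d\le1$ then $G=K_n$ (case (i)), so take $d\ge2$, whence $G$ is not complete. Note $m_G(-1)=n-d$ forces $e(G)=d+1$ (by Proposition~1.2 and $e(G)\ge d+1$), and since a graph with $e$ distinct eigenvalues and an eigenvalue of multiplicity $m$ satisfies $e\le n-m+1$, every eigenvalue of $G$ other than $-1$ is simple. Fix a diametral geodesic $v_0v_1\cdots v_d$; having no chords it induces a path, so $H_0:=G[\{v_1,\dots,v_d\}]\cong P_d$, and when $d\not\equiv2\pmod3$, $-1$ is not an eigenvalue of $P_d$, so $X:=V(G)\setminus\{v_1,\dots,v_d\}$ (of size $n-d$, containing $v_0$ and every off-geodesic vertex) is a star set for $-1$ with star complement $H_0$. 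Writing $A(G)=\left(\begin{smallmatrix}A_X & B^{\top}\\ B & C\end{smallmatrix}\right)$ with $C=A(P_d)$, letting $b_u$ be the characteristic vector of $N_G(u)\cap V(P_d)$, and putting $M:=-(I+C)^{-1}$, the Reconstruction Theorem gives $b_u^{\top}Mb_u=-1$ for all $u\in X$ and $b_u^{\top}Mb_v\in\{0,-1\}$ for distinct $u,v\in X$, the value being $-1$ exactly when $u\sim_G v$. With $D_k:=\det(I+A(P_k))$ one has $D_k=D_{k-1}-D_{k-2}$, so $(D_k)$ is the $6$-periodic sequence $1,1,0,-1,-1,0,\dots$, and the tridiagonal inverse formula gives $M_{ij}=-(-1)^{i+j}D_{i-1}D_{d-j}/D_d$ for $i\le j$. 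Since $v_0$ meets the path only at $v_1$ we get $b_{v_0}=\{v_1\}$, hence $-1=b_{v_0}^{\top}Mb_{v_0}=M_{11}=-D_{d-1}/D_d$, which holds precisely when $d\equiv1\pmod3$ (for $d\equiv0\pmod3$ one gets $M_{11}=0$, a contradiction); and for $d\equiv2\pmod3$ one runs the same argument with the disconnected star complement obtained by deleting $v_i$ for some interior $i\equiv1\pmod3$ (e.g.\ $i=1$), where the corresponding diagonal identity reads $-2=-1$, again impossible. Thus $d\equiv1\pmod3$ and $H_0\cong P_d$.

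It remains to determine the other $b_u$ (the adjacencies then being read off them). Since $v_0\cdots v_d$ is a shortest path, any vertex adjacent to $v_a$ and $v_b$ has $|a-b|\le2$, so each $b_u$ is supported in a window of three consecutive path vertices; evaluating $b_u^{\top}Mb_u$ with the periodic $D_k$ leaves only the singletons $\{v_i\}$ with $i\equiv1\pmod3$, the pairs $\{v_j,v_{j+1}\}$ with $j\not\equiv2\pmod3$, and the triples $\{v_i,v_{i+1},v_{i+2}\}$, and a triple forces $u$ to be a closed twin of $v_{i+1}$ (excluded by $C$-canonicity). Using now $b_{v_0}=\{v_1\}$: a singleton $\{v_i\}$, or a pair $\{v_j,v_{j+1}\}$ with $j\equiv1\pmod3$, makes $b_{v_0}^{\top}Mb_u=-1$ and hence $v_0\sim u$, which together with $d(G)=d$ (no edge may shortcut the geodesic) and $C$-canonicity is impossible; the pair $\{v_{d-1},v_d\}$ makes $u$ a closed twin of $v_d$; and $j\equiv2\pmod3$ is already excluded. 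Hence every $u\in X\setminus\{v_0\}$ has $b_u=\{v_j,v_{j+1}\}$ with $j\equiv0\pmod3$ and $3\le j\le d-4$, and a further evaluation of $b_u^{\top}Mb_v$ shows such vertices have distinct supports and are pairwise non-adjacent. Reconstructing $A(G)$ then yields $G\cong P_5$ when $d=4$, $G\cong P_{d+1}\diamond W$ with $W\subseteq\{v_3,\dots,v_{d-4}\}$ when $d\ge7$, and no graph when $d\in\{2,3,5,6\}$; with the twin reduction this is exactly (i)--(iii).

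\emph{Expected main obstacle.} The residue class $d\equiv1\pmod3$ and the reduction to a path star complement come out cleanly from the single identity $M_{11}=-D_{d-1}/D_d$, and the inversion of $I+A(P_d)$ is routine bookkeeping with the $6$-periodic $D_k$. The real work is the last paragraph: given $b_{v_0}=\{v_1\}$, one must show that the compatibility relations $b_u^{\top}Mb_v\in\{0,-1\}$, together with $C$-canonicity and the constraint that adjoining the $X$-vertices keeps the diameter equal to $d$, force all attachments to be the pairs $\{v_j,v_{j+1}\}$ with $j\equiv0\pmod3$ and $3\le j\le d-4$ --- this is where the precise index set $\{v_3,\dots,v_{d-4}\}$ is pinned down, and it will require a careful, somewhat lengthy case analysis, in particular a distance argument converting a forced edge $v_0u$ into a drop in the diameter.
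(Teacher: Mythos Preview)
Your proposal is correct and, like the paper, rests on star–complement theory, but you and the paper exploit that theory through different lenses.

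The paper works with the eigenspace–projection formulation (its Proposition~2.5): having already carried out, in the proof of Theorem~1.1, a case analysis of the possible neighbourhoods $N_{P_{d+1}}(x)$ via the identities $\mu Pe_v=\sum_{u\sim v}Pe_u$, it simply quotes that analysis here. In particular, it gets $d\equiv1\pmod3$ in one line from the observation that $m_{P_{d+1}}(-1)=1$ is forced (else interlacing gives $m_G(-1)\le n-d-1$), which happens iff $3\mid d+2$; this replaces your separate treatment of the residue $d\equiv2\pmod3$ via a disconnected star complement. To rule out triples and the boundary pairs it uses a short rank lemma (Lemma~3.1: if $N_H(v)=N_H[h]$ and $\operatorname{rk}(A(H)+I)\ge\operatorname{rk}(A(G)+I)-1$, then $N_G[v]=N_G[h]$), rather than your observation that $b_u=(I+C)e_{v_k}$ makes $Mb_u=-e_{v_k}$. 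For sufficiency, the paper deletes the set $S=\{v_j:j\equiv2\pmod3\}$ to obtain $aC_3\cup bK_2$, reads off $m_{G'}(-1)=2a+b$, and combines with $m_{G'}(-1)\le m_{G^c}(-1)+|S|$; this is shorter than your column–span verification through the explicit inverse of $A(P_d)+I$.

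Conversely, your approach via the Reconstruction Theorem and the explicit tridiagonal inverse is more self-contained (it does not need the prior case analysis of Theorem~1.1) and yields the edge structure among the $X$–vertices directly from the bilinear form $b_u^{\top}Mb_v$; this also makes sufficiency automatic, since the same computations certify that the reconstruction identities hold for $P_{d+1}\diamond W$. One place to tighten: when you eliminate the pair $\{v_1,v_2\}$ and the singleton $\{v_1\}$, the diameter argument alone does not bite (no shortcut is created); you are implicitly using $b_u=(I+C)e_{v_1}$ (resp.\ $b_u=b_{v_0}$) to force a closed twin with $v_1$ (resp.\ $v_0$), and it would be worth making that step explicit alongside the triple case.
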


In the next section, we give some basic lemmas and give the proof of Theorem \ref{Th1}. In section 3, we characterize all minimal graphs with $m_G(-1)=n-d$.
\section{$m_G(\mu)=n-d$}
\quad \quad Before  giving our main result, we introduce some useful notations and lemmas.

\begin{Prop}\label{RD}
If $H$ is an induced graph of $G$, then $rk(A(H)-\mu I)\leq rk(A(G)-\mu I)$ for any real number $\mu$.
\end{Prop}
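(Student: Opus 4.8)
The plan is to realize $A(H)-\mu I$ as a principal submatrix of $A(G)-\mu I$ and then invoke the elementary fact that passing to a submatrix cannot increase the rank. Concretely, write $X=V(H)\subseteq V(G)$ and order the vertices of $G$ so that those in $X$ come first; then $A(G)$ has the block form $\begin{pmatrix} A(H) & B\\ B^{T} & D\end{pmatrix}$ for suitable $0$–$1$ matrices $B$ and $D$, and hence $A(G)-\mu I=\begin{pmatrix} A(H)-\mu I & B\\ B^{T} & D-\mu I\end{pmatrix}$. Thus $A(H)-\mu I$ is exactly the submatrix of $A(G)-\mu I$ obtained by keeping the rows and columns indexed by $X$.

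For the inequality itself I would argue as follows. Let $r=rk(A(H)-\mu I)$ and choose an $r\times r$ nonsingular submatrix of $A(H)-\mu I$, say on row set $R$ and column set $S$ with $R,S\subseteq X$. Since $A(H)-\mu I$ is the $X\times X$ block of $A(G)-\mu I$, this same $r\times r$ matrix occurs as a submatrix of $A(G)-\mu I$ on rows $R$ and columns $S$, and it remains nonsingular. Because the rank of a matrix equals the largest order of a nonsingular square submatrix, we get $rk(A(G)-\mu I)\ge r=rk(A(H)-\mu I)$.

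An equivalent way to see this, phrased via linear maps: the columns of $A(H)-\mu I$ are the restrictions to the coordinates in $X$ of the columns of $A(G)-\mu I$ indexed by $X$, and restriction to a coordinate subset is a linear map, so it cannot raise the dimension of a span; hence $rk(A(H)-\mu I)\le rk\big(\text{columns of }A(G)-\mu I\text{ indexed by }X\big)\le rk(A(G)-\mu I)$. There is no genuine obstacle here; the only point worth recording is that $H$ being an \emph{induced} subgraph is precisely what makes $A(H)$, and therefore $A(H)-\mu I$, an honest principal submatrix of $A(G)$ (rather than a matrix obtained after also deleting some edges), so that the submatrix-rank argument applies verbatim.
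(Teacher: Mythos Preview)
Your argument is correct: $A(H)-\mu I$ is a principal submatrix of $A(G)-\mu I$ because $H$ is induced, and rank cannot increase when passing to a submatrix. The paper itself states this proposition without proof, treating it as a standard fact, so there is nothing further to compare.
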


\begin{Lem}\label{VD}{\rm (\cite{Brouwer})}\ \
Let v be a vertex of G, then $m_G(\mu)-1 \leq m_{G-v}(\mu)\leq m_G(\mu)+1$.
\end{Lem}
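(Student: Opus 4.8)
The plan is to reduce the statement to a rank computation, using the identity $rk(A(H)-\mu I)+m_H(\mu)=n(H)$ recorded in the introduction together with Proposition \ref{RD}. The key observation is that if we order the vertices of $G$ so that $v$ is listed last, then $A(G-v)-\mu I$ is exactly the principal submatrix of $A(G)-\mu I$ obtained by deleting the row and the column indexed by $v$. Writing $M=A(G)-\mu I$ and $M'=A(G-v)-\mu I$, I then need only control how $rk(M')$ compares with $rk(M)$, since the two multiplicities are $m_G(\mu)=n-rk(M)$ and $m_{G-v}(\mu)=(n-1)-rk(M')$.

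For the lower bound $m_{G-v}(\mu)\geq m_G(\mu)-1$, I would invoke Proposition \ref{RD}: since $G-v$ is an induced subgraph of $G$, we have $rk(M')\leq rk(M)$. Substituting into the multiplicity expressions gives $m_{G-v}(\mu)=(n-1)-rk(M')\geq (n-1)-rk(M)=m_G(\mu)-1$, which is precisely the left-hand inequality.

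For the upper bound $m_{G-v}(\mu)\leq m_G(\mu)+1$, I would argue that passing from $M$ to $M'$ drops the rank by at most $2$. Indeed, deleting a single column from a matrix lowers its rank by at most $1$, and deleting a single row from the result lowers the rank by at most a further $1$; hence $rk(M')\geq rk(M)-2$. Substituting again, $m_{G-v}(\mu)=(n-1)-rk(M')\leq (n-1)-(rk(M)-2)=(n-rk(M))+1=m_G(\mu)+1$, giving the right-hand inequality.

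I expect no serious obstacle here: this is the vertex-deletion form of Cauchy's interlacing theorem, and the only points requiring care are the bookkeeping with the rank-nullity identity and getting the directions of the two inequalities correct. An alternative, equally short route would be to apply Cauchy interlacing directly to the symmetric matrix $A(G)$ and its principal submatrix $A(G-v)$: if $\mu$ occurs $m$ times among the eigenvalues of $A(G)$, the interlacing inequalities force between $m-1$ and $m+1$ of the eigenvalues of $A(G-v)$ to equal $\mu$. Since the paper attributes the lemma to Brouwer, I would keep the rank-based derivation above, as it remains entirely within the tools already introduced (the rank identity and Proposition \ref{RD}).
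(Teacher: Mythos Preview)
Your argument is correct. Both inequalities follow exactly as you indicate: Proposition~\ref{RD} gives $rk(M')\leq rk(M)$ for the left-hand side, and the elementary fact that deleting one row and one column from a matrix lowers its rank by at most $2$ gives $rk(M')\geq rk(M)-2$ for the right-hand side; converting via $m_H(\mu)=n(H)-rk(A(H)-\mu I)$ yields the claim.

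There is nothing to compare against in the paper itself: Lemma~\ref{VD} is quoted from \cite{Brouwer} without proof. Your write-up therefore supplies what the paper omits, and does so using only machinery already available in the paper (the rank--multiplicity identity from the introduction and Proposition~\ref{RD}). The interlacing alternative you mention at the end is indeed the standard textbook route and is how the cited reference treats it, but your rank-based derivation is cleaner in this context since it avoids importing interlacing as an additional tool.
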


\begin{Lem}\label{PD}{\rm (\cite{Cv2})}\ \
Let $x$ be a pendant vertex of $G$ and $y$ be the unique neighbor of $x$, then $m_G(0)=m_{G-x-y}(0)$.
\end{Lem}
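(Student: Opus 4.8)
The plan is to exhibit an explicit linear isomorphism between the null spaces $\ker A(G)$ and $\ker A(G-x-y)$, which immediately yields the equality of nullities. Write $H = G - x - y$ and order the vertices so that a null vector $\mathbf{z}$ of $A(G)$ splits as $\mathbf{z} = (z_x, z_y, \mathbf{w})$, where $\mathbf{w}$ records the entries on $V(H)$. Reading off $A(G)\mathbf{z}=0$ row by row, the equation indexed by the pendant vertex $x$ involves only its unique neighbor $y$ and therefore reads $z_y = 0$; this is the observation that drives the whole argument.

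First I would substitute $z_y = 0$ into the equations indexed by the vertices of $H$. For $v \in V(H)$ we have $x \notin N_G(v)$, since $x$ is adjacent only to $y$, so the $v$-th equation is $\sum_{u \sim_G v,\, u \neq y} z_u + [v \sim y]\, z_y = 0$; with $z_y = 0$ this collapses to $\sum_{u \sim_H v} w_u = 0$, i.e. $(A(H)\mathbf{w})_v = 0$. Hence the restriction $\mathbf{z} \mapsto \mathbf{w}$ carries $\ker A(G)$ into $\ker A(H)$.

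Next I would check that this map is bijective. Given any $\mathbf{w} \in \ker A(H)$, set $z_y = 0$ and let the equation indexed by $y$, namely $z_x + \sum_{u \sim_G y,\, u \neq x} w_u = 0$, define $z_x$ uniquely. A direct verification shows the resulting vector $\mathbf{z} = (z_x, 0, \mathbf{w})$ satisfies all three families of equations, so the map is surjective; and if $\mathbf{w} = 0$ then $z_y = 0$ and $z_x = 0$, so it is injective. Thus $\ker A(G) \cong \ker A(H)$ as vector spaces and $m_G(0) = m_{G-x-y}(0)$.

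The routine part is the case-free bookkeeping; the one point demanding care is confirming that eliminating $y$ from the equations of the remaining vertices reproduces $A(H)$ exactly --- that is, that the linear system genuinely decouples, so $\mathbf{w}$ alone determines a unique $z_x$ and every $\mathbf{w} \in \ker A(H)$ lifts back. One could instead iterate Lemma \ref{VD} after deleting $x$ and then $y$, but pinning down when each deletion changes the nullity by $+1$ rather than $0$ is precisely the content to be verified, so the direct null-space computation is the cleaner route.
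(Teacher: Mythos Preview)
Your argument is correct: the restriction map $\mathbf z\mapsto\mathbf w$ is a well-defined linear isomorphism between $\ker A(G)$ and $\ker A(H)$, and the verification you outline (forcing $z_y=0$ from the pendant row, decoupling the $H$-rows, and recovering $z_x$ uniquely from the $y$-row) goes through without difficulty.

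There is nothing to compare against, however. In the paper this lemma is merely quoted from Cvetkovi\'c--Gutman \cite{Cv2} and carries no proof of its own; it is invoked as a standard tool. Your null-space isomorphism is exactly the classical proof one finds in the literature, so in effect you have supplied what the paper omits. An equivalent and equally common phrasing is via rank: ordering the vertices as $(x,y,V(H))$, the first two rows of $A(G)$ are $(0,1,\mathbf 0)$ and $(1,0,\ast)$, and a single block elimination shows $\operatorname{rk}A(G)=2+\operatorname{rk}A(H)$, whence $m_G(0)=n-\operatorname{rk}A(G)=(n-2)-\operatorname{rk}A(H)=m_H(0)$. This is just the matrix form of your bijection.
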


The eigenvalues of $P_n$ are $\{2\cos \frac{i\pi }{n+1} | i=1, 2,\ldots, n\}$, we have the following lemma.

\begin{Lem}\label{PE}
Let $\mu$ be a real number, then $m_{P_n}(\mu)\leq 1$. Further, if $m_{P_{n+1}}(\mu)=1$, then $m_{P_n}(\mu)=0$.
\end{Lem}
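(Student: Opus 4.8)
The plan is to run everything off the closed-form spectrum of a path that was just recalled, namely that $P_n$ has eigenvalues $2\cos\frac{i\pi}{n+1}$ for $i=1,\dots,n$. For the first assertion I would observe that $\theta\mapsto 2\cos\theta$ is strictly decreasing on $(0,\pi)$, so the $n$ numbers $2\cos\frac{i\pi}{n+1}$, $i=1,\dots,n$, are pairwise distinct; since $A(P_n)$ is an $n\times n$ real symmetric matrix having exactly these $n$ values as eigenvalues, each eigenvalue of $P_n$ is simple, i.e. $m_{P_n}(\mu)\le 1$ for every real $\mu$.

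For the second assertion I would argue by contradiction. Suppose $m_{P_{n+1}}(\mu)=1$ and also $m_{P_n}(\mu)\neq 0$. Then $\mu=2\cos\frac{j\pi}{n+2}$ for some $j\in\{1,\dots,n+1\}$ and $\mu=2\cos\frac{i\pi}{n+1}$ for some $i\in\{1,\dots,n\}$. Using injectivity of $2\cos(\cdot)$ on $(0,\pi)$ this gives $\frac{j}{n+2}=\frac{i}{n+1}$, hence $j(n+1)=i(n+2)$. Since $\gcd(n+1,n+2)=1$, it follows that $(n+2)\mid j$, contradicting $1\le j\le n+1<n+2$. Therefore $m_{P_n}(\mu)=0$.

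There is essentially no obstacle in this lemma; the only points that need care are the strict monotonicity (hence injectivity) of cosine on $(0,\pi)$, which underlies both halves, and the coprimality of $n+1$ and $n+2$, which is what prevents the spectra of $P_n$ and $P_{n+1}$ from overlapping. If one preferred to avoid the closed form entirely, the same conclusion follows from the recurrence $\phi_k(x)=x\phi_{k-1}(x)-\phi_{k-2}(x)$ for the characteristic polynomials $\phi_k(x)=\det(xI-A(P_k))$, with $\phi_0=1$: any common irreducible factor of $\phi_{n+1}$ and $\phi_n$ would divide every $\phi_k$ down to $\phi_0=1$, so $\phi_n$ and $\phi_{n+1}$ are coprime and share no root, and the simplicity of the eigenvalues follows similarly from $\gcd(\phi_n,\phi_{n-1})=1$ together with a standard argument on tridiagonal matrices.
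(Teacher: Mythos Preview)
Your argument is correct and follows exactly the route the paper intends: the paper does not give a separate proof but simply records the closed-form spectrum $\{2\cos\frac{i\pi}{n+1}\mid i=1,\dots,n\}$ of $P_n$ immediately before the lemma and treats the conclusion as evident. Your write-up just supplies the details (injectivity of $\cos$ on $(0,\pi)$ and $\gcd(n+1,n+2)=1$) that the paper leaves implicit.
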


Let $G$ be a graph of order $n$ with an eigenvalue $\mu$ and $m_G(\mu )=k$.
A $star\ set$ for $\mu$ in $G$ is a vertex set $X$ such that $|X|=k$, and the subgraph $G-X$ does not have $\mu$ as an eigenvalue.
In this situation, $G-X$ is called a $star\ complement$ for $\mu$ in $G$.
Now we recall some properties of star set.

\begin{Prop}\label{SP}
Let $G$ be a graph, $V(G)=\{1,2,\dots, n\}$, $X\subseteq V(G)$. Let $P$ be the matrix which represents the orthogonal projection of $\mathbb{R} ^n$ onto the eigenspace $\mathcal{E}_{A(G)}(\mu )$ with respect to the standard orthonormal basis $\{e_{1}, e_{2},\dots, e_{n}\}$ of $\mathbb{R} ^n$. Then \\
(i)$X$ is a star set for $\mu$ in $G$ if and only if the vectors $Pe_{i}$, $i\in X$, form a basis for $\mathcal{E}_{A(G)}(\mu )$.\\
(ii)The matrix $P$ of (iii) is a polynomial in $A$ and $\mu Pe_v =\sum_{u\sim v} Pe_u$.
\end{Prop}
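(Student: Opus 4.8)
The plan is to treat the two parts separately, handling (ii) by the spectral theorem and (i) by a duality argument relating the star-set condition to the geometry of the eigenspace $\mathcal{E}:=\mathcal{E}_{A(G)}(\mu)$ and its orthogonal complement. Throughout I would write $M=A(G)-\mu I$, $k=m_G(\mu)=\dim\mathcal{E}$, $\bar X=V(G)\setminus X$, and $U_X=\langle e_i:i\in X\rangle$, $U_{\bar X}=\langle e_j:j\in\bar X\rangle$. Since $M$ is symmetric, $\ker M=\mathcal{E}$ and $\operatorname{Im}M=(\ker M)^\perp=\mathcal{E}^\perp=\ker P$; these identifications are used repeatedly.

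For (ii) I would first record that the projection $P$ is a polynomial in $A$. Writing the spectral decomposition $A=\sum_j\lambda_j P_j$ with the $P_j$ the orthogonal projections onto the distinct eigenspaces, the Lagrange interpolation formula $P=\prod_{\lambda_j\neq\mu}(A-\lambda_j I)/(\mu-\lambda_j)$ exhibits $P=P_{j_0}$ (where $\lambda_{j_0}=\mu$) as a polynomial in $A$; in particular $AP=PA$. Because $\operatorname{Im}P=\mathcal{E}$ and $A$ acts as $\mu$ on $\mathcal{E}$, we also have $AP=\mu P$. The stated identity is then immediate: using $Ae_v=\sum_{u\sim v}e_u$ (the $v$-th column of $A$), $\mu Pe_v=APe_v=PAe_v=P\big(\sum_{u\sim v}e_u\big)=\sum_{u\sim v}Pe_u$.

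For (i) the chain of equivalences I would establish is: $X$ is a star set $\iff$ $\mu$ is not an eigenvalue of $G-X$ (with $|X|=k$, by definition) $\iff$ $U_{\bar X}\cap\mathcal{E}=\{0\}$ $\iff$ $U_X\cap\mathcal{E}^\perp=\{0\}$ $\iff$ $\{Pe_i:i\in X\}$ is linearly independent $\iff$ it is a basis of $\mathcal{E}$, the last step only using $|X|=k=\dim\mathcal{E}$. The two links at the ends are easy: a dependence $\sum_i c_i Pe_i=0$ is the same as $\sum_i c_i e_i\in\ker P=\mathcal{E}^\perp$, i.e. a nonzero vector of $U_X\cap\mathcal{E}^\perp$, which gives the fourth equivalence; and linear independence of $k$ vectors in the $k$-dimensional space $\mathcal{E}$ is the same as being a basis. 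The equivalence $U_X\cap\mathcal{E}^\perp=\{0\}\iff U_{\bar X}\cap\mathcal{E}=\{0\}$ I would obtain from the identity $(U_X\cap\mathcal{E}^\perp)^\perp=U_X^\perp+(\mathcal{E}^\perp)^\perp=U_{\bar X}+\mathcal{E}$: the left side is trivial iff its orthogonal complement $U_{\bar X}+\mathcal{E}$ equals $\mathbb{R}^n$, and since $\dim U_{\bar X}+\dim\mathcal{E}=(n-k)+k=n$, the latter holds iff $U_{\bar X}\cap\mathcal{E}=\{0\}$.

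The main obstacle is the remaining equivalence $U_{\bar X}\cap\mathcal{E}=\{0\}\iff\mu$ is not an eigenvalue of $G-X$, where care is needed because a $\mu$-eigenvector of the principal submatrix $M[\bar X]=A(G-X)-\mu I$ need not extend to a $\mu$-eigenvector of $A(G)$ (the off-diagonal block $M[X,\bar X]$ may be nonzero). One direction is direct: if $M[\bar X]$ is nonsingular, then any $v\in U_{\bar X}\cap\mathcal{E}$ has support in $\bar X$, so $M[\bar X]v_{\bar X}=(Mv)_{\bar X}=0$ forces $v=0$. For the converse I would not try to extend an eigenvector naively; instead, assuming $U_{\bar X}\cap\mathcal{E}=\{0\}$ (equivalently $U_X\cap\operatorname{Im}M=\{0\}$), take $w$ with $M[\bar X]w=0$ and set $\tilde w=(0,w)$. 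Then $M\tilde w$ is supported on $X$ (since $(M\tilde w)_{\bar X}=M[\bar X]w=0$), so $M\tilde w\in U_X\cap\operatorname{Im}M=\{0\}$; hence $\tilde w\in\ker M=\mathcal{E}$, so $\tilde w\in U_{\bar X}\cap\mathcal{E}=\{0\}$ and $w=0$, proving $M[\bar X]$ nonsingular. This closing step, which routes the troublesome cross-term through $\operatorname{Im}M=\mathcal{E}^\perp$ rather than attempting a direct extension, is the crux of the argument.
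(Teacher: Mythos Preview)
The paper does not supply its own proof of this proposition: it is introduced with ``Now we recall some properties of star set'' and stated without argument, as a standard fact from the theory of star sets and star complements (cf.\ Cvetkovi\'c--Rowlinson--Simi\'c). So there is no in-paper proof to compare against.

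Your argument is correct and self-contained. Part~(ii) is the standard spectral-theorem computation: $P$ is the Lagrange interpolation polynomial in $A$, hence $AP=PA$, and since $A$ acts as $\mu$ on $\operatorname{Im}P=\mathcal{E}$ one gets $\mu P=AP=PA$, from which $\mu Pe_v=PAe_v=\sum_{u\sim v}Pe_u$. For part~(i), your chain of equivalences is clean; the only nontrivial link is showing that $U_{\bar X}\cap\mathcal{E}=\{0\}$ forces $M[\bar X]$ to be nonsingular, and your trick of routing the padded vector $M\tilde w$ through $U_X\cap\operatorname{Im}M=U_X\cap\mathcal{E}^\perp=\{0\}$ handles the off-diagonal block neatly. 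One small remark: in the statement ``$X$ is a star set $\iff\ldots$ (with $|X|=k$, by definition)'', the condition $|X|=k$ is part of the hypothesis on both sides, so the final step ``linearly independent $\iff$ basis'' is legitimate; you use it correctly, but it is worth making explicit that without $|X|=k$ the equivalence in (i) would only read ``$\{Pe_i\}_{i\in X}$ linearly independent $\iff$ $X$ extends to a star set''.
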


\begin{Lem}\label{SS}{\rm (\cite{P3}, Lemma 2.3)}\ \
Let $X$ be a star set for $\mu$ in $G$ and let $U$ be a proper subset of $X$. Then $X\subseteq U$ is a star set for $\mu$ in $G-U$.
\end{Lem}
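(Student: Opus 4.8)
The plan is to verify the two defining properties of a star set for $\mu$ in $G-U$ directly. Write $|X|=k=m_G(\mu)$, so that $|X\setminus U|=k-|U|$. Since $U\subseteq X$, we have $(G-U)-(X\setminus U)=G-X$, and $\mu$ is not an eigenvalue of $G-X$ because $X$ is a star set for $\mu$ in $G$; hence $\mu$ is not an eigenvalue of $(G-U)-(X\setminus U)$, which is exactly the ``star complement'' half of the definition. It therefore remains only to show that $m_{G-U}(\mu)=|X\setminus U|=k-|U|$.

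First I would record an elementary auxiliary fact: for any graph $H$ and any $Y\subseteq V(H)$ for which $\mu$ is not an eigenvalue of $H-Y$, one has $|Y|\ge m_H(\mu)$. This follows by deleting the vertices of $Y$ from $H$ one at a time and applying the lower bound $m_{H'-v}(\mu)\ge m_{H'}(\mu)-1$ of Lemma \ref{VD} at each step; iterating gives $0=m_{H-Y}(\mu)\ge m_H(\mu)-|Y|$. Applying this with $H=G-U$ and $Y=X\setminus U$ (legitimate by the first paragraph) yields $m_{G-U}(\mu)\le |X\setminus U|=k-|U|$.

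For the reverse inequality, delete the $|U|$ vertices of $U$ from $G$ one at a time, again using Lemma \ref{VD}: each deletion lowers $m(\mu)$ by at most $1$, so $m_{G-U}(\mu)\ge m_G(\mu)-|U|=k-|U|$. Combining the two bounds gives $m_{G-U}(\mu)=k-|U|=|X\setminus U|$, and together with the first paragraph this is precisely the assertion that $X\setminus U$ is a star set for $\mu$ in $G-U$.

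I do not expect a genuine obstacle here: the whole argument is bookkeeping with Cauchy interlacing (Lemma \ref{VD}), and the only mildly nonobvious ingredient is the auxiliary bound $|Y|\ge m_H(\mu)$, which is itself just iterated interlacing. A more structural alternative would invoke the projection characterisation of Proposition \ref{SP}(i), arguing that the projections of $\{e_i:i\in X\setminus U\}$ onto $\mathcal{E}_{A(G-U)}(\mu)$ still form a basis of that space; but this route is fiddlier, since $\mathcal{E}_{A(G)}(\mu)$ and $\mathcal{E}_{A(G-U)}(\mu)$ sit inside ambient spaces of different dimension, so I would keep the interlacing proof.
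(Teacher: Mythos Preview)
Your argument is correct. Note, however, that the paper does not supply its own proof of this lemma: it is simply quoted from \cite{P3} (Lemma~2.3 there), so there is nothing in the present paper to compare your approach against. Your interlacing bookkeeping via Lemma~\ref{VD} is a clean and standard way to establish the result; an essentially equivalent one-line variant would use Proposition~\ref{RD} directly, observing that $G-X$ is an induced subgraph of $G-U$ so $rk(A(G-U)-\mu I)\ge rk(A(G-X)-\mu I)=n-k$, whence $m_{G-U}(\mu)\le (n-|U|)-(n-k)=k-|U|$, with the reverse inequality exactly as you wrote it.
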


\begin{Th}\label{DO}{\rm (\cite{Cv}, Proposition 5.1.4)}\ \
Let $X$ be a star set for $\mu$ in $G$, and let $\overline{X} = V(G)\setminus{X}$. If $\mu\neq 0$ , then $\overline{X}$ is a dominating set for $G$.
\end{Th}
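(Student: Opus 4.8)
The plan is to prove the statement by contradiction, showing that if $\overline{X}$ fails to be a dominating set for $G$, then $\mu = 0$. First I would unwind the definition: $\overline{X}$ dominates $G$ exactly when every vertex of $X$ has a neighbor in $\overline{X}$, since each vertex of $\overline{X}$ dominates itself. So I would assume that some $v \in X$ has all of its neighbors inside $X$, i.e. $N_G(v) \subseteq X$, and aim for the conclusion $\mu = 0$.

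The engine of the proof is Proposition \ref{SP}. Let $P$ denote the orthogonal projection of $\mathbb{R}^n$ onto the eigenspace $\mathcal{E}_{A(G)}(\mu)$. By part (ii) this projection is a polynomial in $A$ and obeys $\mu Pe_v = \sum_{u \sim v} Pe_u$ at every vertex $v$. I would apply this identity at the chosen $v$: because every neighbor $u$ of $v$ lies in $X$, it rewrites as the linear relation
\[
\mu\,Pe_v - \sum_{u \in N_G(v)} Pe_u = 0,
\]
in which every participating vector has the form $Pe_i$ with index $i \in X$.

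Next I would invoke part (i) of Proposition \ref{SP}: since $X$ is a star set for $\mu$, the family $\{Pe_i : i \in X\}$ is a basis of $\mathcal{E}_{A(G)}(\mu)$, hence linearly independent. Because $G$ is simple we have $v \notin N_G(v)$, so $Pe_v$ is not among the summands and occurs in the displayed relation with the single coefficient $\mu$. Linear independence then forces all coefficients to vanish; in particular $\mu = 0$, contradicting $\mu \neq 0$. This would complete the argument.

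I expect the proof to be short once Proposition \ref{SP} is available, so the only delicate point is the coefficient bookkeeping: I must make sure the term $Pe_v$ is genuinely isolated, which is precisely where the simplicity of $G$ (no loop at $v$) is needed so that $\mu$ cannot be cancelled against the neighbor sum. Should this route need reinforcement, I would fall back on the block form $A = \left(\begin{smallmatrix} A_X & B \\ B^{T} & C \end{smallmatrix}\right)$, where $C$ is the adjacency matrix of the star complement $G - X$; as $\mu$ is not an eigenvalue of $C$ the matrix $\mu I - C$ is invertible, and eliminating the $\overline{X}$-coordinates of a $\mu$-eigenvector yields the fundamental equation $\mu I - A_X = B(\mu I - C)^{-1} B^{T}$. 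Reading off its $(v,v)$ entry when the $v$-th row of $B$ vanishes would once more give $\mu = 0$. Between the two, I would favor the projection argument as it uses the quoted propositions verbatim.
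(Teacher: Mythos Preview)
The paper does not supply its own proof of this theorem; it is quoted verbatim as Proposition~5.1.4 from \cite{Cv} and used as a black box (the paper only adds a separate argument, Lemma~\ref{DD}, for the $\mu=0$ case under a connectedness hypothesis). So there is no in-paper proof to compare against.

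That said, your argument is correct and is exactly the standard proof: if some $v\in X$ had $N_G(v)\subseteq X$, then the eigenspace identity $\mu Pe_v=\sum_{u\sim v}Pe_u$ becomes a nontrivial linear relation among the vectors $\{Pe_i:i\in X\}$, which by Proposition~\ref{SP}(i) form a basis; since $v\notin N_G(v)$ the coefficient of $Pe_v$ is $\mu$, forcing $\mu=0$. Your alternative via the block decomposition and the relation $\mu I-A_X=B(\mu I-C)^{-1}B^{\top}$ is also the classical route and would work equally well.
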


Let $G$ be a connected graph, it is easy to prove that the above result also holds for $\mu=0$.

\begin{Lem}\label{DD}
Let $G$ be a connected graph. Let $X$ be a star set for $\mu$ in $G$, and let $\overline{X} = V(G)\setminus{X}$. Then $\overline{X}$ is a dominating set for $G$.
\end{Lem}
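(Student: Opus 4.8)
The plan is to split into two cases according to whether $\mu=0$. If $\mu\neq 0$ there is nothing to prove: this is exactly the content of Theorem \ref{DO}. So I would assume $\mu=0$ and argue by contradiction. Suppose $\overline{X}$ is \emph{not} a dominating set of $G$. Then some vertex $v\in X$ satisfies $N_G(v)\cap\overline{X}=\emptyset$, i.e. $N_G(v)\subseteq X$; and since $G$ is connected with at least two vertices, $N_G(v)\neq\emptyset$.

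The key tool is Proposition \ref{SP}. Let $P$ denote the matrix of the orthogonal projection of $\mathbb{R}^n$ onto $\mathcal{E}_{A(G)}(0)$. Part (ii) of Proposition \ref{SP} gives the identity $\mu\,Pe_v=\sum_{u\sim v}Pe_u$, which for $\mu=0$ reads
\[ \sum_{u\in N_G(v)}Pe_u=0 . \]
Because $N_G(v)\subseteq X$ and $N_G(v)\neq\emptyset$, this is a non-trivial linear dependence among the vectors $\{Pe_u:u\in X\}$ (the coefficient vector is the nonzero indicator of $N_G(v)$). But by part (i) of Proposition \ref{SP}, $X$ being a star set for $0$ means precisely that $\{Pe_u:u\in X\}$ is a basis of $\mathcal{E}_{A(G)}(0)$, hence linearly independent — a contradiction. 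Therefore every vertex of $X$ has a neighbour in $\overline{X}$, and since the vertices of $\overline{X}$ dominate themselves, $\overline{X}$ is a dominating set of $G$.

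I do not anticipate a real obstacle: the argument is essentially a one-line consequence of the projection identity $\mu\,Pe_v=\sum_{u\sim v}Pe_u$ together with the linear independence of $\{Pe_u\}_{u\in X}$. The only point that needs a word of care is the degenerate case $n=1$ (where $\overline{X}=\emptyset$ is, strictly speaking, not dominating); this is harmless in all applications, where $G$ has positive diameter, and may simply be excluded. If one prefers a proof that does not invoke Theorem \ref{DO} as a black box, the displayed identity works verbatim for general $\mu$ as well, except that then a vertex $v\in X$ with $N_G(v)\subseteq X$ yields $\mu\,Pe_v=\sum_{u\in N_G(v)}Pe_u$, a relation expressing the basis vector $Pe_v$ (scaled by $\mu$) in terms of the other basis vectors, which is again impossible unless $\mu=0$ — recovering Theorem \ref{DO} and the $\mu=0$ case uniformly.
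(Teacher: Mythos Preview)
Your proof is correct, but it follows a genuinely different route from the paper's. The paper also splits into $\mu\neq 0$ (citing Theorem~\ref{DO}) and $\mu=0$, but for $\mu=0$ it argues via ranks: if some $x\in X$ has $N_H(x)=\emptyset$ (with $H=G[\overline X]$), connectivity supplies a neighbour $y\in X$ of $x$ with $N_H(y)\neq\emptyset$; then $x$ is a pendant vertex of $H'=G[\overline X\cup\{x,y\}]$, and the pendant-vertex nullity identity (Lemma~\ref{PD}) gives $m_{H'}(0)=m_H(0)=0$, forcing $\mathrm{rk}(A(H'))=\mathrm{rk}(A(H))+2>\mathrm{rk}(A(G))$, a contradiction to Proposition~\ref{RD}. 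Your argument instead feeds $\mu=0$ directly into the projection identity $\mu Pe_v=\sum_{u\sim v}Pe_u$ from Proposition~\ref{SP}(ii) to get a nontrivial dependence among $\{Pe_u:u\in X\}$, contradicting Proposition~\ref{SP}(i). This is shorter, avoids the auxiliary Lemma~\ref{PD}, and --- as you observe --- handles both cases uniformly, essentially re-deriving Theorem~\ref{DO} in the process. The paper's approach, on the other hand, is more self-contained in that it does not appeal to the projection machinery at this point; both are perfectly valid.
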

\begin{proof}
If $\mu\neq 0$, then the result holds by Theorem \ref{DO}.
If $\mu=0$, let $H=G[\overline{X}]$ be a star complement for $\mu$ in $G$. Suppose that $\overline{X}$ is not a dominating set for $G$, then there exists two vertices $x,y\in X$ such that $x\sim y$, $N_H(x)=\emptyset$ and $N_H(y)\neq \emptyset$. Let $H'=G[\overline{X}\cup\{x,y\}]$, then $x$ is a pendant vertex of $H'$. By Lemma \ref{PD}, $m_H(0)=m_{H'}(0)=0$. Further, $r(H')=|\overline{X}|+2-m_{H'}(0)=|\overline{X}|-m_{H}(0)+2=r(H)+2=r(G)+2$, a contradiction to Lemma \ref{RD}.
\end{proof}

Now we consider the graphs with $m_G(\mu)=n-d$. If $G$ is a path ,then the following lemma is obvious.

\begin{Lem}
Let $P_n$ be a path with order $n$. Then $m_{P_n}(\mu)=n-d(P_n)$ if and only if $\mu \in \{2\cos \frac{i\pi }{n+1} | i=1, 2,\ldots, n\}$.
\end{Lem}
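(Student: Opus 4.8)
The plan is to reduce the statement to two facts that are already recorded in the text. First I would note that the diameter of a path $P_n$ on $n$ vertices is attained by its two endpoints, so $d(P_n)=n-1$ and therefore $n-d(P_n)=1$. Consequently the asserted equality $m_{P_n}(\mu)=n-d(P_n)$ is nothing but $m_{P_n}(\mu)=1$, i.e.\ the assertion that $\mu$ is an eigenvalue of $P_n$ of multiplicity $1$; but since, by Lemma \ref{PE}, $m_{P_n}(\mu)\le 1$ for every real $\mu$, this is in turn equivalent to $\mu$ simply being an eigenvalue of $P_n$.

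For the forward implication, suppose $m_{P_n}(\mu)=n-d(P_n)=1$. Then $\mu$ is an eigenvalue of $P_n$, and since the spectrum of $P_n$ is exactly $\{2\cos\frac{i\pi}{n+1}\mid i=1,2,\dots,n\}$ (as recalled immediately before Lemma \ref{PE}), we conclude $\mu\in\{2\cos\frac{i\pi}{n+1}\mid i=1,2,\dots,n\}$. For the reverse implication, suppose $\mu=2\cos\frac{i\pi}{n+1}$ for some $i\in\{1,\dots,n\}$. Then $\mu$ is an eigenvalue of $P_n$, so $m_{P_n}(\mu)\ge 1$; combined with the bound $m_{P_n}(\mu)\le 1$ from Lemma \ref{PE}, this forces $m_{P_n}(\mu)=1=n-d(P_n)$, as required.

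There is no genuine obstacle here: the whole content is the elementary observation $d(P_n)=n-1$ together with the already-quoted spectrum of $P_n$ and the multiplicity bound of Lemma \ref{PE}. The only point worth stating carefully is that both supporting facts should be cited from the surrounding text rather than re-derived, so that the proof remains a one-line consequence; this is consistent with the paper's own remark that the lemma is ``obvious'' once $G$ is a path.
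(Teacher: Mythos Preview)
Your proposal is correct and matches the paper's own treatment: the paper gives no formal proof of this lemma, merely declaring it ``obvious,'' and your argument makes explicit exactly the two trivial ingredients the paper has in mind, namely $d(P_n)=n-1$ and the quoted spectrum of $P_n$ together with Lemma~\ref{PE}. There is nothing to add.
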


Now we assume $G$ is not a path and give the proof of Theorem \ref{Th1}.

\vskip 2mm\noindent\textbf{Proof of Theorem \ref{Th1}}
 \vskip 2mm
\begin{proof}
Let $G$ be a graph with order $n$ and diameter $d$.
Suppose that $G$ is not a path and $m_G(\mu)=n-d$, then we have $n\geq d+2$ and $d\geq 1$.
If $d=1$, then $G\cong K_n$ is a completed graph. Since $n\geq d+2$, then $m_G(\mu)=n-d$ if and only if $\mu=-1$.
Now we assume that $P_{d+1}=v_0v_1\cdots v_{d-1}v_d$ is a diametrical path of $G$, where $d\geq 2$.

If $m_{P_{d+1}}(\mu)=0$, then by Lemma \ref{VD},$ m_G(\mu) \leq m_{P_{d+1}}(\mu)+n-(d+1)=n-d-1$, a contradiction. Thus, $m_{P_{d+1}}(\mu)=1$.
Let $P'= P_{d+1}-v_0$, by Lemma \ref{PE}, $m_{P'}(\mu)=0$.
Besides, $|P'|=d=n-m_G(\mu)$, which means that $P'$ is a star complement for $\mu$ in $G$.
Let $X=V(G)\setminus V(P_{d+1})$, then $X\neq \emptyset$ since $n\geq d+2$. By Lemma \ref{DD}, $x\sim P'$ for any $x\in X$.
Let $N_{P_{d+1}}(x)=\{i_1,\ldots, i_s\}$. It is easy to see that $1\leq s\leq 3$ since ${P_{d+1}}$ is a diametrical path of $G$, otherwise we can find a shorter path from $v_0$ to $v_d$.
Moreover, $N_{P_{d+1}}(x)\in \{\{v_i\}, \{v_i, v_{i+1}\}, \{v_i, v_{i+2}\}, \{v_i, v_{i+1}, v_{i+2}\}\}$ for some $i$.

Let $G'=P_{d+1}+x$ be an induced graph of $G$, then  by Lemma \ref{SS}, $X'=\{v_0, x\}$ is a star set for $\mu$ in $G'$.
Let $P$ be the matrix which represents the orthogonal projection of $\mathbb{R} ^{d+2}$ onto the eigenspace $\mathcal{E}_{A(G')}(\mu )$ with respect to the standard orthonormal basis $\{e_{v_0}, e_{v_1},\dots, e_{v_d},e_x\}$ of $\mathbb{R} ^{d+2}$.
By (i) of Proposition \ref{SP}, $Pe_{v_0}$ and $Pe_x$ are linearly independent.
There are two cases to discuss here.\\
\textbf{Case 1:} $x\sim v_0$. From the discussion above, three cases need to be considered.

\textbf{Case 1.1:} $N_{P_{d+1}}(x)= \{v_0, v_1\}$.

Now we use the important equality in (ii) of Proposition \ref{SP}.
$$
    \left\{
      \begin{aligned}
        \mu Pe_{v_0}&=Pe_{v_1}+Pe_x,\\
        \mu Pe_{x}&=Pe_{v_1}+Pe_{v_0}.
      \end{aligned}
      \right.
      $$

Then $(\mu +1)Pe_{v_0}=(\mu +1) Pe_x$. If $\mu \notin \{0,-1\}$, then $Pe_{v_0}= Pe_x$, which contradicts the fact that $Pe_{v_0}$ and $Pe_x$ are linearly independent. Notice that if $\mu=-1$, this case is possible.

\textbf{Case 1.2:} $N_{P_{d+1}}(x)= \{v_0, v_2\}$.
$$
    \left\{
      \begin{aligned}
        \mu Pe_{v_1}&=Pe_{v_0}+Pe_{v_2},\\
        \mu Pe_{x}&=Pe_{v_0}+Pe_{v_2}.
      \end{aligned}
      \right.
      $$

If $\mu \notin \{0,-1\}$, then $Pe_{v_1}=Pe_x$ and $\mu Pe_{v_0}=Pe_{v_1}+Pe_x=2Pe_x$, a contradiction. Notice that if $\mu=-1$, this case is also impossible.

\textbf{Case 1.3:} $N_{P_{d+1}}(x)= \{v_0, v_1, v_2\}$.
$$
    \left\{
      \begin{aligned}
        \mu Pe_{v_1}&=Pe_{v_0}+Pe_{v_2}+Pe_x,\\
        \mu Pe_{x}&=Pe_{v_0}+Pe_{v_2}+Pe_{v_1}.
      \end{aligned}
      \right.
      $$

Then $(\mu +1)Pe_{v_1}=(\mu +1) Pe_x$. If $\mu \notin \{0,-1\}$, then $Pe_{v_1}=Pe_x$ and $\mu Pe_{v_0}=Pe_{v_1}+Pe_x=2Pe_x$, a contradiction. Notice that if $\mu=-1$, this case is possible.
\\
\textbf{Case 2:} $x\nsim v_0$. From the discussion above, four cases need to be considered.

\textbf{Case 2.1:} $N_{P_{d+1}}(x)= \{v_i\}$, $1\leq i \leq d$.

By the equalities $\mu Pe_{v_0}= Pe_{v_1}$ and $\mu Pe_{v_j}= Pe_{v_{j-1}}+Pe_{v_{j+1}}$, $1\leq j \leq i-1$, we have $Pe_{v_i} = \textbf{0}$ or $Pe_{v_i} = k Pe_{v_0}$, where \textbf{0} is null vector and $k\neq 0$ is a real number. Besides, $\mu Pe_x= Pe_{v_i}$. If $\mu \notin \{0,-1\}$, then $\mu Pe_x = \textbf{0}$ or $\mu Pe_x = k Pe_{v_0}$, a contradiction. Notice that if $\mu=-1$, this case is also impossible.

\textbf{Case 2.2:} $N_{P_{d+1}}(x)= \{v_i,v_{i+1}\}$, $1\leq i \leq d-1$.
$$
    \left\{
      \begin{aligned}
        \mu Pe_{v_{i}}&=Pe_{v_{i-1}}+Pe_{v_{i+1}}+Pe_x,\\
        \mu Pe_{x}&=Pe_{v_i}+Pe_{v_{i+1}}.
      \end{aligned}
      \right.
      $$

Then $(\mu+1)Pe_x=(\mu+1)Pe_{v_{i}}-Pe_{v_{i-1}}$. Similar to the discussion in Case 2.1, we can assume that $Pe_{v_{i}}=kPe_{v_0}$ and $Pe_{v_{i-1}}=lPe_{v_0}$, where $k,l$ are real numbers. It can be seen that $k,l$ cannot both be 0, otherwise we can deduce $Pe_{v_0}=\textbf{0}$. Suppose that $\mu \notin \{0,-1\}$, then
\begin{itemize}
\item If $k=0$, then $(\mu+1)Pe_x = -Pe_{v_{i-1}}= -lPe_{v_0} \neq \textbf{0}$, a contradiction.
Notice that if $\mu=-1$, this case is also impossible.
\item If $k\neq 0$ and $l=0$, then $(\mu+1)Pe_x=(\mu+1)Pe_{v_{i}}= k(\mu+1)Pe_{v_0}$, a contradiction.
Notice that if $\mu=-1$, this case is possible.
\item If $k\neq 0$ and $l\neq 0$, then $\textbf{0} \neq (\mu+1)Pe_x=(\mu+1)Pe_{v_{i}}-Pe_{v_{i-1}}= (k\mu+k-l)Pe_{v_0}$, a contradiction.
Notice that if $\mu=-1$, this case is also impossible.
\end{itemize}

\textbf{Case 2.3:} $N_{P_{d+1}}(x)= \{v_i,v_{i+2}\}$, $1\leq i \leq d-2$.

$$
    \left\{
      \begin{aligned}
        \mu Pe_{v_{i+1}}&=Pe_{v_i}+Pe_{v_{i+2}},\\
        \mu Pe_x&=Pe_{v_i}+Pe_{v_{i+2}}.
      \end{aligned}
      \right.
      $$

If $\mu \notin \{0,-1\}$, then $Pe_{v_{i+1}}=Pe_x$ and $\mu Pe_{v_i}-Pe_{v_{i-1}}=Pe_x+Pe_{v_{i+1}}=2Pe_x \neq \textbf{0}$. On the other hand, $\mu Pe_{v_i}-Pe_{v_{i-1}}= kPe_{v_0}$, where $k$ is a real number, a contradiction.
Notice that if $\mu=-1$, this case is also impossible.

\textbf{Case 2.4:} $N_{P_{d+1}}(x)= \{v_i,v_{i+1},v_{i+2}\}$, $1\leq i \leq d-2$.
$$
    \left\{
      \begin{aligned}
        \mu Pe_{v_{i+1}}&=Pe_{v_i}+Pe_{v_{i+2}}+Pe_x,\\
        \mu Pe_x&=Pe_{v_i}+Pe_{v_{i+2}}+Pe_{v_{i+1}}.
      \end{aligned}
      \right.
      $$

If $\mu \notin \{0,-1\}$, then $Pe_{v_{i+1}}=Pe_x$ and $\mu Pe_{v_i}-Pe_{v_{i-1}}=Pe_x+Pe_{v_{i+1}}=2Pe_x \neq \textbf{0}$. On the other hand, $\mu Pe_{v_i}-Pe_{v_{i-1}}= kPe_{v_0}$,  where $k$ is a real number, a contradiction.
Notice that if $\mu=-1$, this case is possible.

To sum up,  if $G$ is not a path and $\mu \notin \{0,-1\}$, then $m_G(\mu)\leq n-d-1$, as desired.
\end{proof}

\section{Graphs with $m_G(-1)=n-d$}

\quad \quad In \cite{WD}, all graphs with $m_G(0)=n-d$ are characterized. In this section, we will characterize all graphs with $m_G(-1)=n-d$. We start this section with some useful lemmas.

\begin{Lem}\label{NE}
Let $H$ be an induced subgraph of a connected graph $G$ for which $rank(A(H)+I)\geq rank(A(G)+I)-1$, and $v\in V(G)\backslash V(H)$.
If $v\sim h$ and $N_H(v)=N_H[h]$ for a vertex $h\in V(H)$, then $N_G[v]=N_G[h]$.
\end{Lem}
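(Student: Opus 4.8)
The plan is to show that $v$ and $h$ have exactly the same closed neighbourhood in $G$ by first showing $v\sim h$ forces the restriction $N_H(v)=N_H[h]$ to lift to all vertices of $G$ outside $H$, using the near-equality of ranks together with the projection identities of Proposition \ref{SP} applied at the eigenvalue $\mu=-1$. First I would note that $\operatorname{rk}(A(H)+I)\le\operatorname{rk}(A(G)+I)$ by Proposition \ref{RD}, and the hypothesis $\operatorname{rk}(A(H)+I)\ge \operatorname{rk}(A(G)+I)-1$ pins the rank down to one of two values; in either case $m_G(-1)\le m_H(-1)+|V(G)\setminus V(H)|$ is almost tight, so only a tiny ``defect'' is available. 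The cleanest route is to argue by contradiction: suppose $N_G[v]\ne N_G[h]$. Since $N_H(v)=N_H[h]$ already holds, the discrepancy must be witnessed by a vertex $w\in V(G)\setminus V(H)$ (possibly $w=v$ is excluded, so $w$ ranges over the other outside vertices, or $w=h$'s status toward outside vertices) with $w$ adjacent to exactly one of $v,h$.

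The key computational step is to use the eigenvector/projection relation $\mu Pe_t=\sum_{s\sim t}Pe_s$ from part (ii) of Proposition \ref{SP}. Working with $\mu=-1$, consider the vector $z=e_v-e_h$ (or the corresponding projected vector $Pe_v-Pe_h$). The condition $v\sim h$ together with $N_H(v)=N_H[h]$ is exactly engineered so that, when we compute $(A(G)+I)z$ restricted to the coordinates indexed by $V(H)$, all contributions cancel: each neighbour of $h$ inside $H$ is a neighbour of $v$, the vertex $h$ itself lies in $N_H(v)$, and the ``$+I$'' supplies the diagonal term that matches $h\in N_G[h]$. Hence $(A(G)+I)z$ is supported on $V(G)\setminus V(H)$. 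If in addition $z$ were an eigenvector, i.e. $N_G[v]=N_G[h]$, we would be done; the point is to show the rank hypothesis forbids $z$ from contributing a genuinely new direction to the column space of $A(G)+I$ beyond what $A(H)+I$ already provides. Concretely, I would show that if $N_G[v]\neq N_G[h]$ then $A(H)+I$, $A(G[V(H)\cup\{v\}])+I$, and $A(G[V(H)\cup\{v,h\}])+I$ have ranks increasing by the full amount $+1$ each time (because $v$ becomes effectively a pendant-type or otherwise rank-raising attachment relative to $H$, and then $h$'s extra outside-adjacency raises it again), contradicting $\operatorname{rk}(A(H)+I)\ge\operatorname{rk}(A(G)+I)-1$.

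More carefully, the mechanism is: because $N_H(v)=N_H[h]$ and $v\sim h$, in the graph $H'=G[V(H)\cup\{v\}]$ the vertex $v$ together with $h$ form a pair with $N_{H'}[v]=N_{H'}[h]$ — so adding $v$ to $H$ increases $\operatorname{rk}(\cdot+I)$ by exactly $2$ (the analogue for $-1$ of the pendant-vertex Lemma \ref{PD}: a ``duplicate-type'' vertex for the eigenvalue $-1$ changes the rank in a controlled way, and one checks $m_{H'}(-1)=m_H(-1)$, forcing $\operatorname{rk}(A(H')+I)=\operatorname{rk}(A(H)+I)+2$). Wait — I must be careful about the exact increment; the honest statement is that $\operatorname{rk}(A(H')+I)\ge \operatorname{rk}(A(H)+I)+1$ with equality iff $v$ is a genuine ``closed-twin'' of some vertex of $H'$. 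Then, if $N_G[v]\ne N_G[h]$, there is $w\notin V(H)$ with $w\sim v,\ w\nsim h$ or $w\sim h,\ w\nsim v$; adjoining $w$ (and at most the finitely many outside vertices) produces a further strict rank increase, pushing $\operatorname{rk}(A(G)+I)$ at least $2$ above $\operatorname{rk}(A(H)+I)$ — the desired contradiction. So $N_G[v]=N_G[h]$.

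I expect the main obstacle to be bookkeeping the rank increments precisely: the eigenvalue $-1$ does not enjoy the clean pendant-deletion lemma that $0$ does (Lemma \ref{PD}), so I will need an ad hoc but short argument — most likely via the projection vectors $Pe_t$ and the relation $-Pe_t=\sum_{s\sim t}Pe_s$ — to show that the vector $Pe_v-Pe_h$ (resp.\ a suitable combination involving $w$) is nonzero and linearly independent from the projections of the vertices of $H$, which is exactly what ``$\operatorname{rk}$ strictly increases'' means. Handling the subcase where the offending $w$ might itself be one of finitely many mutually-adjacent outside vertices (so that several of them must be added at once) will require a small induction or a simultaneous rank computation, but the core cancellation $(A(G)+I)(e_v-e_h)$ supported off $V(H)$ is robust and drives the whole argument.
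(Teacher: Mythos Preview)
Your overall contradiction strategy is the same as the paper's: assume some $w\in V(G)\setminus V(H)$ is adjacent to exactly one of $v,h$, and show this forces $\operatorname{rk}(A(\cdot)+I)$ to jump by $2$ over $\operatorname{rk}(A(H)+I)$, violating the hypothesis. However, your execution of the rank bookkeeping is wrong in a way that matters. You claim that adjoining $v$ to $H$ already gives $\operatorname{rk}(A(H')+I)\ge\operatorname{rk}(A(H)+I)+1$ (after first guessing $+2$). In fact, because $N_{H'}[v]=N_{H'}[h]$, the vertex $v$ is a closed twin of $h$ in $H'$, and by Lemma~\ref{GPD} one has $m_{H'}(-1)=m_H(-1)+1$, hence $\operatorname{rk}(A(H')+I)=\operatorname{rk}(A(H)+I)$: the increment is $0$, not $1$ or $2$. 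Consequently your two-step ``$+1$ then $+1$'' picture collapses, and the whole burden falls on proving that adjoining the single discriminating vertex $w$ to $H'$ raises the rank by \emph{two}. Your projection-vector heuristic via Proposition~\ref{SP} does not establish this; the relation $-Pe_t=\sum_{s\sim t}Pe_s$ lives in the eigenspace of $G$, not of $H'$ or $H'+w$, and you have no a priori control over what $Pe_v-Pe_h$ looks like or why it should be independent of anything. The observation that $(A(G)+I)(e_v-e_h)$ is supported off $V(H)$ is correct but does not by itself yield a rank inequality.

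The paper bypasses all of this. It takes $G_1=H+v+x$ (with $x$ the discriminating vertex, your $w$), writes $A(G_1)+I$ in block form with rows/columns ordered $x,v,h,V(H)\setminus\{h\}$, and exhibits an explicit congruence matrix $Q$ such that $Q^{\top}(A(G_1)+I)Q=\begin{pmatrix}0&1\\1&0\end{pmatrix}\oplus (A(H)+I)$. This gives $\operatorname{rk}(A(G_1)+I)=\operatorname{rk}(A(H)+I)+2$ in one stroke, contradicting Proposition~\ref{RD}. The reverse inclusion $N_G[h]\subseteq N_G[v]$ is then obtained by the symmetry trick of replacing $H$ by $H+v-h$. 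If you want to salvage your approach, drop the projection machinery and prove the $+2$ jump directly by such a congruence (or, equivalently, by exhibiting two vectors in the column space of $A(G_1)+I$ that are manifestly independent of the columns coming from $H$).
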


\begin{proof}
Suppose there exists a vertex $x\in V(G)\backslash V(H)$ such that $x\in N_G[v]$ and $x\notin N_G[h]$. Let $G_1=H+v+x$ be an induced subgraph of $G$.
Arrange the vertices of $G_1$ such that

\begin{center}
    $A(G_1)+I=
    \bordermatrix{
      & x        & v        & h                     &                     \cr
    x & 1        & 1        & 0                     &\beta^\top                    \cr
    v & 1        & 1        & 1                     &\alpha^\top                     \cr
    h & 0        & 1        & 1                     &\alpha ^\top              \cr
      & \beta    & \alpha   & \alpha                &A(H-h)+I          \cr
    },
    $
  \end{center}
where $\alpha, \beta$ are an $|H|-1$ dimensional vector.
Let

\begin{center}
    $Q=
    \bordermatrix{
      & x        & v        & h                     &                     \cr
    x & 1        & 0        & 0                     &0                    \cr
    v & -1       & 1        & 0                     &-\beta^\top                      \cr
    h & 1        & -1       & 1                     &\beta^\top                \cr
      & 0        & 0        & 0                     &I_k         \cr
    },
    $
  \end{center}
  where $k=|H|-1$. Then

\begin{center}
    $Q\top (A(G_1)+I)Q=
    \bordermatrix{
      & x        & v        & h                     &                     \cr
    x & 0        & 1        & 0                     &0                   \cr
    v & 1        & 0        & 0                     &0                    \cr
    h & 0        & 0       & 1                     &\alpha ^\top              \cr
      & 0         & 0       & \alpha                &A(H-h)+I          \cr
    }
    =\bordermatrix{
      & x        & v                          &                     \cr
    x & 0        & 1                             &0                   \cr
    v & 1        & 0                            &0                    \cr
      & 0         & 0                     &A(H)+I          \cr
    }.
    $
  \end{center}
Thus, $rank(A(G_1)+I)=rank(A(H)+I)+2\geq rank(A(G)+I)+1$, which contradicts Lemma \ref{RD}.
So we have $N_G[v]\subseteq N_G[h]$. Let $H_1= H+v-h$ be an induced subgraph of $G$.
Then $H_1$ and $H$ are isomorphic since $v\sim h$ and $N_H(v)=N_H[h]$. Furthermore, $rank(A(H_1)+I)\geq rank(A(G)+I)-1$ and $N_{H_1}(h)=N_{H_1}[v]$.
Now viewing $H_1$ as $H$ and viewing $v$ as $h$, then by the discussion above we conclude that $N_G[h]\subseteq N_G[v]$.
Thus the result follows.
\end{proof}

\begin{Cor}\label{NEE}
Let $H$ be an induced subgraph of a connected graph $G$ for which $rk(A(H)+I)\geq rk(A(G)+I)-1$, and $u, v\in V(G)\backslash V(H)$.
If $u\sim v$ and $N_H(u)=N_H(v)$ for a vertex $h\in V(H)$, then $N_G[u]=N_G[v]$.
\end{Cor}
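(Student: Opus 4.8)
The statement to prove is Corollary~\ref{NEE}: if $H$ is an induced subgraph of a connected graph $G$ with $rk(A(H)+I)\geq rk(A(G)+I)-1$, and $u,v\in V(G)\setminus V(H)$ satisfy $u\sim v$ and $N_H(u)=N_H(v)$, then $N_G[u]=N_G[v]$.

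\medskip
\textbf{Proof proposal.}
The plan is to deduce this corollary from Lemma~\ref{NE} by a single relabelling trick, so that essentially no new computation is needed. The key observation is that Lemma~\ref{NE} concerns a vertex $h$ already \emph{inside} $H$, whereas here both $u$ and $v$ lie outside $H$; so the first step is to absorb one of them into the subgraph. Concretely, I would set $H'=G[V(H)\cup\{v\}]=H+v$, an induced subgraph of $G$ containing $v$. I then need to check the two hypotheses of Lemma~\ref{NE} with $H'$ playing the role of $H$, $u$ playing the role of $v$, and $v$ playing the role of $h$.

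\medskip
First, the rank condition: I must verify $rk(A(H')+I)\geq rk(A(G)+I)-1$. Since $H\subseteq H'\subseteq G$ are all induced, Proposition~\ref{RD} gives $rk(A(H)+I)\leq rk(A(H')+I)\leq rk(A(G)+I)$; combined with the hypothesis $rk(A(H)+I)\geq rk(A(G)+I)-1$, this pins $rk(A(H')+I)$ to be either $rk(A(G)+I)-1$ or $rk(A(G)+I)$, and in both cases the required inequality $rk(A(H')+I)\geq rk(A(G)+I)-1$ holds. Second, the adjacency condition: Lemma~\ref{NE} applied to $H'$ and the vertex $u\in V(G)\setminus V(H')$ requires $u\sim v$ (which is given) and $N_{H'}(u)=N_{H'}[v]$. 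Now $N_{H'}[v]=N_H(v)\cup\{v\}$ since $v\in V(H')$ and $v$'s neighbours inside $H'$ are exactly its neighbours in $H$; and $N_{H'}(u)=N_H(u)\cup\{v\}$ because $u\notin V(H')$, $u\sim v$, and $u$'s other neighbours in $H'$ are its neighbours in $H$. Using the hypothesis $N_H(u)=N_H(v)$, these two sets coincide: $N_{H'}(u)=N_H(u)\cup\{v\}=N_H(v)\cup\{v\}=N_{H'}[v]$. So Lemma~\ref{NE} applies and yields $N_G[u]=N_G[v]$, which is exactly the conclusion.

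\medskip
I expect the main (and essentially only) subtlety to be the careful bookkeeping in the adjacency-set identities above — making sure that passing from $H$ to $H'=H+v$ turns the symmetric-looking condition $N_H(u)=N_H(v)$ into the asymmetric closed-neighbourhood condition $N_{H'}(u)=N_{H'}[v]$ that Lemma~\ref{NE} demands, with the extra element $v$ appearing on both sides for the right reason ($v\in N_{H'}(u)$ because $u\sim v$, and $v\in N_{H'}[v]$ trivially). Everything else is a direct invocation of Proposition~\ref{RD} and Lemma~\ref{NE}. (I note in passing that the stray phrase ``for a vertex $h\in V(H)$'' in the statement of the corollary is vestigial — no such $h$ is needed — and I would simply not refer to it in the proof.)
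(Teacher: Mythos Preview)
Your proposal is correct and matches the paper's own proof essentially verbatim: the paper sets $H'=H+u$ (you chose $H'=H+v$, the symmetric choice), notes $rk(A(H')+I)\geq rk(A(H)+I)\geq rk(A(G)+I)-1$ and $N_{H'}(v)=N_{H'}[u]$, and invokes Lemma~\ref{NE}. Your extra remark about the vestigial ``for a vertex $h\in V(H)$'' in the statement is also apt.
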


\begin{proof}
Let $H'=H+u$, then $rk(A(H')+I)\geq rk(A(H)+I)\geq rk(A(G)+I)-1$ and $N_{H'}(v)=N_{H'}[u]$. By Lemma \ref{NE}, $N_G[u]=N_G[v]$.
\end{proof}

\begin{Lem}\label{GPD}{\rm (\cite{Pe}, Theorem 5)}\ \
Let $G$ be a graph. If two vertices $u,v$ satisfy  $u\rho v$, then $m_G(-1)=m_{G-u}(-1)+1=m_{G-v}(-1)+1$.
\end{Lem}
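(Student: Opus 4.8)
The plan is to work entirely at the level of the matrix $A(G)+I$, using the identity $rk(A(G)-\mu I)+m_G(\mu)=n$ from the introduction with $\mu=-1$. The first observation is that the hypothesis $u\rho v$, i.e. $N_G[u]=N_G[v]$, says precisely that the $u$-th and $v$-th rows of $A(G)+I$ coincide: the $(u,w)$ entry of $A(G)+I$ equals $1$ exactly when $w\in N_G[u]$, and likewise for $v$, so equality of the closed neighbourhoods forces equality of these two rows. By symmetry of $A(G)+I$ the $u$-th and $v$-th columns coincide as well. (As a sanity check, this is the familiar fact that $e_u-e_v$ lies in $\mathcal{E}_{A(G)}(-1)$, since $(A(G)+I)(e_u-e_v)$ is the difference of two equal columns.)

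The second step is an elementary rank lemma: if a symmetric matrix $M$ has two equal rows, say rows $u$ and $v$, and hence two equal columns, then deleting row $v$ and column $v$ yields a matrix $M'$ with $rk(M')=rk(M)$. Indeed, removing the duplicate row $v$ leaves the row space unchanged, and in the resulting matrix the $v$-th column still duplicates the $u$-th column, so removing column $v$ leaves the column space unchanged. Applying this with $M=A(G)+I$, for which $M'=A(G-v)+I$, gives $rk(A(G)+I)=rk(A(G-v)+I)$.

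Combining the two steps, with $n=|V(G)|$, we get
\[
m_G(-1)=n-rk(A(G)+I)=n-rk(A(G-v)+I)=n-\big((n-1)-m_{G-v}(-1)\big)=m_{G-v}(-1)+1,
\]
and since the condition $N_G[u]=N_G[v]$ is symmetric in $u$ and $v$, exchanging their roles gives $m_G(-1)=m_{G-u}(-1)+1$ as well. I do not expect a real obstacle here; the only point needing a little care is the rank-preservation claim of the second step — namely that one must delete both the redundant row \emph{and} the redundant column, and that this is legitimate precisely because, after removing row $v$, the $v$-th and $u$-th columns of the smaller matrix still agree. Everything else is bookkeeping with $rk(A(G)+I)+m_G(-1)=n$.
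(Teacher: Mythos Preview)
Your argument is correct. The identification of rows $u$ and $v$ of $A(G)+I$ via $N_G[u]=N_G[v]$ is exactly right, and the rank-preservation step (delete the duplicate row, then the duplicate column) is valid for the reason you state.

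As for comparison with the paper: there is nothing to compare against. The paper does not prove this lemma; it is quoted verbatim from Petrovi\'c \cite{Pe} (Theorem~5) and used as a black box. Your two-line rank argument is a self-contained elementary proof that the paper could have included but chose to cite instead.
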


\begin{Lem}\label{Gc}
Let $G^c$ be the $C$-canonical graph of a graph $G$. If $d(G^c)\geq 1$, then $d(G^c)=d(G)$.
\end{Lem}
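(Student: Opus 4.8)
\textbf{Proof proposal for Lemma \ref{Gc}.}
The plan is to show that identifying $\rho$-equivalent vertices never changes the distance between surviving representatives, provided the canonical graph is not a single vertex (equivalently $d(G^c)\ge 1$). Write $\pi\colon V(G)\to V(G^c)$ for the quotient map sending each vertex to the class $C_i$ containing it. The crucial observation is that if $u\rho v$, i.e. $N_G[u]=N_G[v]$, then $u\sim v$ and, moreover, $u$ and $v$ have exactly the same neighbours outside $\{u,v\}$; consequently in any shortest path one may freely interchange $u$ and $v$, and a shortest path never needs to use both. Thus every $\pi$-fibre behaves like a ``fat vertex'': walking through the fibre costs nothing extra.

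First I would prove the inequality $d(G^c)\le d(G)$. Take any edge $C_iC_j$ of $G^c$; by definition of the quotient there exist $a\in C_i$, $b\in C_j$ with $a\sim b$ in $G$, so $\pi$ maps edges to edges and hence a $(u,v)$-path in $G$ projects to a walk of the same length from $\pi(u)$ to $\pi(v)$ in $G^c$; extracting a path gives $d_{G^c}(\pi(u),\pi(v))\le d_G(u,v)$. Since every class of $G^c$ has a representative in $G$, taking the maximum over all pairs yields $d(G^c)\le d(G)$.

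For the reverse inequality $d(G)\le d(G^c)$, I would lift a shortest path in $G^c$ back to $G$. Let $C_{i_0}C_{i_1}\cdots C_{i_\ell}$ be a shortest path in $G^c$ realising $d(G^c)=\ell\ge 1$; the classes $C_{i_0},\dots,C_{i_\ell}$ are pairwise distinct. Choose $x_0\in C_{i_0}$ and $x_\ell\in C_{i_\ell}$ arbitrarily. Between consecutive classes $C_{i_t}$ and $C_{i_{t+1}}$ there is at least one edge of $G$; here I use that within a class all vertices have the same closed neighbourhood, so if \emph{some} vertex of $C_{i_t}$ is adjacent to \emph{some} vertex of $C_{i_{t+1}}$, then in fact \emph{every} vertex of $C_{i_t}$ is adjacent to every vertex of $C_{i_{t+1}}$ (because $C_{i_t}\ne C_{i_{t+1}}$ forces the adjacency to be ``external'' and hence shared by the whole class). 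This lets me pick, for each intermediate class, a single representative $x_t\in C_{i_t}$ and conclude $x_0\sim x_1\sim\cdots\sim x_\ell$ is a path of length $\ell$ in $G$ connecting a vertex of $C_{i_0}$ to a vertex of $C_{i_\ell}$. Therefore $d_G(x_0,x_\ell)\le\ell=d(G^c)$; since $x_0,x_\ell$ may be chosen as any witnesses for a diametral pair of $G^c$, and since any two vertices of $G$ in the same class are at distance $1$ (or $0$), a short case check shows $d(G)\le d(G^c)$, combining to give $d(G)=d(G^c)$.

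The main obstacle is the clean handling of the ``every vertex of one class is joined to every vertex of an adjacent class'' claim and the boundary bookkeeping: one must be careful that two vertices of $G$ in the \emph{same} class are at distance at most $1$, so that blowing a class back up cannot create a pair at distance exceeding $d(G^c)$ — this is exactly where the hypothesis $d(G^c)\ge1$ is needed, since if $G^c$ were a single vertex then $G$ is a complete graph with $d(G)=1>0=d(G^c)$. Apart from that, the argument is a routine projection/lifting pair, and I would present it as the two inequalities above.
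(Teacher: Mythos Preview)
Your proposal is correct and follows essentially the same projection/lifting strategy as the paper: project $G$-paths to $G^c$-walks for one inequality, lift $G^c$-paths to $G$-paths for the other, treat the intra-class case separately, and invoke $d(G^c)\ge 1$ at the end. You are in fact slightly more careful than the paper in justifying the lift (the ``adjacent classes are completely joined'' observation), though your reverse-inequality paragraph would read more cleanly if you lifted an arbitrary shortest $(C_i,C_j)$-path rather than only a diametral one before appealing to the ``short case check''.
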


\begin{proof}
Recall that the vertex set $V(G)$ is divided into $k$ disjoint subsets $C_1,C_2, \ldots, C_k$, ($1\leq k \leq n, n = |V(G)|$), such
that every graph induced by the set $C_i$ ($i = 1,\ldots, k$) is a complete graph.
We assume $V(G^c)=\{c_1,c_2,\ldots c_k\}$, where $c_i$ corresponds to $C_i$ in $G$.
Let $u\in C_i$, $v\in C_j$ be two distinct vertices. If $i=j$, then $d_G(u,v)=1$.
If $i\neq j$, we prove $d_G(u,v)=d_{G^c}(c_i,c_j)$.
Suppose $d_G(u,v)= a$ and $d_{G^c}(c_i,c_j)=b$.
Let $c_ic_{l_1}\ldots c_{l_{b-1}}c_j$ be a path of length $b$ from $c_i$ to $c_j$ in $G^c$.
Then there exists $x_{l_k}\in C_{l_k}$($k=1,\ldots,b-1$) such that $ux_{l_1}\ldots x_{l_{b-1}}v$ is a path of length $b$ from $u$ to $v$ in $G$.
We have $d_G(u,v)=a\leq b$.
Conversely, given a path of length $a$ from $u$ to $v$ in $G$, we can get a corresponding walk of length $a$ from $c_i$ to $c_j$ in $G^c$, which says that $d_{G^c}(v_i,v_j)=b\leq a$. Hence, $d_G(u,v)=d_{G^c}(c_i,c_j)$.
Further, we know that $d(G)= max\{M,N\}$, where

$$
    \left\{
      \begin{aligned}
        M&=max\{d_{G}(u,v)| u \in C_i, v\in C_j , 1\leq i\neq j\leq k\},\\
        N&=max\{d_{G}(u,v)| u,v\in C_i, 1\leq i\leq k \}.
      \end{aligned}
      \right.
      $$
One easily finds that $M=d(G^c)$ and $N=1$. Since $d(G^c)\geq 1$, then $d(G^c)=d(G)$.
\end{proof}

\begin{Lem}\label{Gcc}
Let $G^c$ be the $C$-canonical graph of a graph $G$ with $d(G)\geq 1$. Then $m_G(-1)=n(G)-d(G)$ if and only if $m_{G^c}(-1)=n(G^c)-d(G^c)$.
\end{Lem}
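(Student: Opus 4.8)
The plan is to reduce the statement to a controlled bookkeeping of how passing from $G$ to $G^c$ changes both the $(-1)$-eigenvalue multiplicity and the diameter, and to show the two changes cancel. The key structural fact is Lemma \ref{GPD}: whenever $u\rho v$ (i.e.\ $N_G[u]=N_G[v]$), deleting one of them drops $m_G(-1)$ by exactly $1$. Since $G^c$ is obtained from $G$ by a sequence of such vertex identifications — each identification being the same as deleting one vertex of a $\rho$-pair — repeatedly applying Lemma \ref{GPD} gives
\[
m_G(-1)=m_{G^c}(-1)+\bigl(n(G)-n(G^c)\bigr).
\]
For the diameter, Lemma \ref{Gc} states that if $d(G^c)\geq 1$ then $d(G^c)=d(G)$; so first I would dispose of the degenerate case $d(G^c)=0$, i.e.\ $G^c$ is a single vertex. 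In that case $G$ is a complete graph $K_n$, $d(G)=1$, and one computes directly that $m_G(-1)=n-1=n-d(G)$ while trivially $m_{G^c}(-1)=0=n(G^c)-d(G^c)$, so the equivalence holds (both sides true). Having handled this, one may assume $d(G^c)\geq 1$, hence $d(G^c)=d(G)$ by Lemma \ref{Gc}.

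With these two identities in hand the proof is essentially algebraic. From the displayed multiplicity identity, $m_G(-1)=n(G)-d(G)$ is equivalent to
\[
m_{G^c}(-1)+n(G)-n(G^c)=n(G)-d(G),
\]
i.e.\ $m_{G^c}(-1)=n(G^c)-d(G)$, and since $d(G)=d(G^c)$ this is exactly $m_{G^c}(-1)=n(G^c)-d(G^c)$. Conversely the same chain run backwards gives the other implication. So after the case split, the entire content is: (a) the iterated application of Lemma \ref{GPD} to get the telescoping multiplicity formula, and (b) the substitution $d(G)=d(G^c)$.

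The main obstacle — really the only place requiring care — is justifying the iterated use of Lemma \ref{GPD}: I must argue that the process of forming $G^c$ can be organized as deleting, one at a time, a vertex $u$ from a pair $u\rho v$ that are still in relation $\rho$ \emph{at that stage} of the reduction. This needs the observation that if $C_i$ is a $\rho$-class of $G$ with $|C_i|\geq 2$, then any two vertices of $C_i$ remain in relation $\rho$ after deleting other vertices of $C_i$ (since $N[\cdot]$ restricted within the class is unaffected, as the class induces a clique and all its members have the same closed neighborhood), and deleting vertices from other classes $C_j$ likewise does not separate a surviving $\rho$-pair inside $C_i$. One should also note the relation $\rho$ is transitive (stated in the introduction), so the classes are well defined. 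After peeling each class down to a single representative we arrive exactly at $G^c$, and each deletion step contributed one unit to both $m(-1)$ and to $n$, giving the telescoping identity. I would phrase this as a short induction on $n(G)-n(G^c)$, with base case $G=G^c$ trivial.

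\begin{proof}
If $d(G^c)=0$ then $G^c$ is a single vertex, so $G$ is a complete graph $K_n$ with $n=n(G)\geq 1$ and $d(G)=1$ (recall $d(G)\geq 1$). Then $m_G(-1)=n-1=n(G)-d(G)$, and also $m_{G^c}(-1)=0=n(G^c)-d(G^c)$, so the equivalence holds. Hence we may assume $d(G^c)\geq 1$, and then $d(G^c)=d(G)$ by Lemma \ref{Gc}.

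We claim $m_G(-1)=m_{G^c}(-1)+\bigl(n(G)-n(G^c)\bigr)$, proved by induction on $t=n(G)-n(G^c)$. If $t=0$ then $G=G^c$ and there is nothing to prove. If $t\geq 1$, some $\rho$-class $C_i$ of $G$ has $|C_i|\geq 2$; pick distinct $u,v\in C_i$. Since $C_i$ induces a clique in which every vertex has closed neighborhood $N_G[u]$, we have $u\rho v$, so by Lemma \ref{GPD}, $m_G(-1)=m_{G-u}(-1)+1$. The graph $G-u$ has the same $\rho$-classes as $G$ except that $C_i$ is replaced by $C_i\setminus\{u\}$ (deleting $u$ does not change $N[\cdot]$ within any other class, nor within $C_i\setminus\{u\}$), so $(G-u)^c\cong G^c$ and $n(G-u)-n((G-u)^c)=t-1$. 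Moreover $d(G-u)=d(G^c)\geq 1$ is unchanged. By the induction hypothesis applied to $G-u$,
\[
m_{G-u}(-1)=m_{G^c}(-1)+\bigl(n(G-u)-n(G^c)\bigr)=m_{G^c}(-1)+(t-1),
\]
whence $m_G(-1)=m_{G^c}(-1)+t$, proving the claim.

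Finally, using $n(G)-n(G^c)=t$ and $d(G)=d(G^c)$, the equality $m_G(-1)=n(G)-d(G)$ is equivalent to
\[
m_{G^c}(-1)+t=n(G)-d(G)=\bigl(n(G^c)+t\bigr)-d(G^c),
\]
i.e.\ to $m_{G^c}(-1)=n(G^c)-d(G^c)$. This proves both implications.
\end{proof}
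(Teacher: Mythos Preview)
Your proof is correct and follows essentially the same approach as the paper's: both combine the telescoping identity $m_G(-1)=m_{G^c}(-1)+\bigl(n(G)-n(G^c)\bigr)$ obtained from iterated application of Lemma~\ref{GPD} with the equality $d(G)=d(G^c)$ from Lemma~\ref{Gc}, after which the equivalence is straightforward algebra. You are in fact more thorough than the paper, since you explicitly treat the degenerate case $d(G^c)=0$ (i.e.\ $G$ complete) and you justify carefully, via induction and the stability of $\rho$-classes under deletion of a $\rho$-twin, the telescoping formula that the paper invokes without elaboration.
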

\begin{proof}
If $d(G^c)\geq 1$, then $d(G^c)=d(G)$ by Lemma \ref{Gc}.
Meanwhile, by Lemma \ref{GPD}, $m_G(-1)=m_{G^c}(-1)+(n(G)-n(G^c))$. If $m_G(-1)=n(G)-d(G)$, then $m_{G^c}(-1)+(n(G)-n(G^c))=n(G)-d(G^c)$, i.e.,  $m_{G^c}(-1)=n(G^c)-d(G^c)$, and vice versa.
\end{proof}

By the discussion above, a graph $G$ satisfies $m_G(-1)=n(G)-d(G)$ if and only if the $C$-canonical graph $G^c$ of $G$ also satisfy $m_{G^c}(-1)=n(G^c)-d(G^c)$. In the following, we just have to think about the case where $G$ is a $C$-canonical graph.

\vskip 2mm\noindent\textbf{Proof of Theorem \ref{Th2}}
 \vskip 2mm
 \begin{proof}
 \textbf{`Necessity'} \ \
Let $G$ be a connected order $n\geq 2$, diameter $d$.
If $G\cong K_n$ is a complete graph, then $m_G(-1)=n-1=n-d$, (i) holds.
Let's assume $d\geq 1$ and $m_G(-1)=n-d$.
By Lemma \ref{Gcc}, $m_G(-1)=n(G)-d(G)$ if and only if $m_{G^c}(-1)=n(G^c)-d(G^c)$.
In the following, we just need to study the structure of $G^c$.
For the sake of simplicity, we assume $G=G^c$ is a $C$-canonical graph.

One can easily see that $d\neq 1$ for all $C$-canonical graphs.
Let $d\geq 2$, we assume that $P_{d+1}=v_0v_1\cdots v_{d-1}v_d$ is a diametrical path of $G$.
Let $P'= v_1\cdots v_{d-1}v_d$ and $x\in V(G)\backslash V(P)$.
Now we give some claims.

\textbf{Claim 1:} $d\equiv 1(mod\ 3)$.\\
Firstly, by the proof of Theorem \ref{Th1}, $m_{P_{d+1}}(-1)=1$ and $P'$ is a star complement for $-1$ in $G$.
Since the eigenvalues of $P_{d+1}$ are $\{2\cos \frac{i\pi }{d+2} | i=1, 2,\ldots, d+1\}$, then $m_{P_{d+1}}(-1)=1$ if and only if $d\equiv 1(mod\ 3)$.

\textbf{Claim 2:} $N_{P_{d+1}}(x)\in \{\{v_i,v_{i+1}\}, \{v_i,v_{i+1}, v_{i+2}\}\}$ for some $i$.\\
Since $m_{G}(-1)=n-d$, by the proof of Theorem \ref{Th1}, there are four possible cases: case 1.1, case 1.3, case 2.2 and case 2.4.
The claim is obvious.

\textbf{Claim 3:} $N_{P_{d+1}}(x)=\{v_i,v_{i+1}\}$ for some $i$.

If $N_{P_{d+1}}(x)=\{v_i,v_{i+1}, v_{i+2}\}$, $0\leq i\leq d-2$, then $N_{P_{d+1}}(x)=N_{P_{d+1}}[v_{i+1}]$.
Since $rk(A(G)+I)=n-m_{G}(-1)=d$ and $rk(A(P_{d+1})+I)=d$, then $rk(A(P_{d+1})+I)\geq rk(A({G})+I)-1$.
By Lemma \ref{NE}, $N_{G}[x]=N_{G}[v_{i+1}]$, i.e., $x\rho v_{i+1}$ in ${G}$, which contradicts ${G}$ is a connected $C$-canonical graph.
Thus, $N_{P_{d+1}}(x)=\{v_i,v_{i+1}\}$ for some $i$.

\textbf{Claim 4:} If $N_{P_{d+1}}(x)=\{v_i,v_{i+1}\}$, then $1\leq i\leq d-2$ and $i\equiv 0 (mod \ 3)$.

If $i=0$, then $N_{P_{d+1}}(x)=N_{P_{d+1}}[v_0]$. By Lemma \ref{NE}, we can conclude $x\rho v_0$ in $G$, a contradiction.
If $i=d-1$, then $N_{P_{d+1}}(x)=N_{P_{d+1}}[v_{d}]$ and $x\rho v_{d}$ in $G$, a contradiction. Thus, $1\leq i\leq d-2$.
Furthermore, we just need to consider the case 2.2 in the proof of Theorem \ref{Th1}.
Follow the definition $\{G', X, P, e_{v_i}\}$ in the proof of Theorem \ref{Th1}.
Since $m_G(-1)=n-d$, then by case 2.2, $Pe_{v_{i}}\neq \mathbf{0}$ and $Pe_{v_{i-1}}=\mathbf{0}$.
Meanwhile, it is easy to check that

$$
    \left\{
      \begin{aligned}
        Pe_{v_{j}}&=Pe_{v_0},\ \  &j\equiv 0(mod\ 3), 0\leq j \leq i,\\
        Pe_{v_{j}}&=-Pe_{v_0},   &j\equiv 1(mod\ 3),0\leq j \leq i,\\
        Pe_{v_{j}}&=\mathbf{0},  &j\equiv 2(mod\ 3),0\leq j \leq i.
      \end{aligned}
      \right.
      $$
Consequently, $i-1\equiv 2(mod\ 3)$ and $i\equiv 0(mod\ 3)$, as desired.

In the following, let $y\in V(G)\backslash V(P)$ be a different vertex from $x$. Then we have Claim 5 and Claim 6.

\textbf{Claim 5:} If $N_{P_{d+1}}(x)=\{v_i,v_{i+1}\}$ and $N_{P_{d+1}}(y)=\{v_j,v_{j+1}\}$, then $i\neq j$.

Suppose $i=j$. If $x\sim y$, then by Corollary \ref{NEE}, $x\rho y$ in $G$, a contradiction.
If$x\nsim y$, then by simple elementary transformations we have $rk(A(P+x+y)+I)=rk(B)$, where
\begin{center}
    $B=
    \bordermatrix{
      & x        & y                             &                     \cr
    x & 2        & 0                             &0                   \cr
    y & 0       & \frac{1}{2}                            &\alpha^\top                    \cr
      & 0         & \alpha                      &A(P_{d+1})+I          \cr
    }.
    $
  \end{center}
Since $rk(B)\geq rk(A(P_{d+1})+I )+1=d+1$, then $rk(A(P+x+y)+I)> rk(A(G)+I)$, which contradicts Lemma \ref{RD}.

\textbf{Claim 6:} If$N_{P_{d+1}}(x)=\{v_i,v_{i+1}\}$ and $N_{P_{d+1}}(y)=\{v_j,v_{j+1}\}$, then $x\nsim y$.

By Claim 4 and Claim 5, $j-i\geq 3$. Without loss of generality, we assume $j>i$.
 If $x\sim y$, then there exists a shorter walk $v_0v_1\ldots v_ixyv_{j+1}\ldots v_d$ from $v_0$ to $v_d$, a contradiction.

Binding Claim 1 to Claim 5, if $d\geq 2$, then $G$ is isomorphic to $P_5$ or $P_{d+1} \diamond W$, where $d\equiv 1(mod\ 3)$ and $W$ is a subset of the set $U= \{v_3,v_6,\ldots, v_{d-4}\}$.
We have done.

 \textbf{`Sufficiency'}\ \  \
 Obviously, $m_{G}(-1)=n-d$ when $G$ is a complete graph or $G^c\cong P_5$.
 Suppose $d\geq 7$, $d\equiv 1(mod\ 3)$ and $G^c$  is isomorphic to $P_{d+1} \diamond W$, where $P_{d+1}=v_0v_1\cdots v_{d-1}v_d$ and $W$ is a subset of the set $U= \{v_3,v_6,\ldots, v_{d-4}\}$.
 Let $S=\{v_j| j\equiv 2 (mod \ 3), 1\leq j \leq d\}$ and $G'=G^c-S$.
 By the definition of $P_{d+1} \diamond W$, one can see that $|W|=n(G^c)-(d(G^c)+1)$ and $G'= aC_3 \cup bK_2$, where $a=|W|$ and $b=|S|+1-|W|$. Thus, $m_{G'}(-1)=2a+b$.
 Meanwhile, $m_{G'}(-1)\leq m_{G^c}(-1)+|S|$ by Lemma \ref{VD}. Combining the above formulas, we have $m_{G^c}(-1)\geq |W|+1=n(G^c)-d(G^c)$. On the other hand, $m_{G^c}(-1)\leq n(G^c)-d(G^c)$, we have done.

\end{proof}

\noindent{\footnotesize
}

\begin{thebibliography}{99}
   \bibitem{Bee} R.A. Beezer, Trees with very few eigenvalues, J. Graph Theorey 14 (1990) 509-517.
   \bibitem{Cv2} D. Cvetkovi$\acute{c}$, I. Gutman, The algebraic multiplicity of the number zero in the spectrum of a bipartite graph, Mat. Vesnik  9 (1972) 141–150.
   \bibitem{Cv} D. Cvetkovi$\acute{c}$, P. Rowlinson, S. Simi$\acute{c}$, An introduction to the theory of graph spectra, Cambridge University Press, Cambridge, 2010.
   \bibitem{Brouwer} A.E. Brouwer, W.H. Haemers, Spectra of Graphs, Springer, New York, 2011.
   \bibitem{Du} Z. Du, C.M. da Fonseca, The characterization of the minimal weighted acyclic graphs, Linear Algebra Appl. 672 (2023) 75-92.
   \bibitem{Pe}M. Petrovi$\acute{c}$,  On Graphs with Exactly One Eigenvalue Less Than 1, J. Comb. Theory B, 52(1991) 102-112.
   \bibitem{P2} P. Rowlinson, On multiple eigenvalues of trees, Linear Algebra Appl. 432 (2010) 3007-3011.
   \bibitem{P3} P. Rowlinson, An extension of the star complement technique for regular graphs, Linear Algebra Appl. 557(2018) 496-507. 3345-3353.
   \bibitem{WZ} Z. Wang, The multiplicities of eigenvalues of a graph, Appl. Math. Comput. 421 (2022) 126957.
   \bibitem{WD} D. Wong, A characterzation of long graphs of arbitrary rank, Linear Algebra Appl. 438(2013) 1347-1355.
  \end{thebibliography}
\end{document}